\documentclass[11pt]{article}
\usepackage{graphicx} 
\textwidth 6.2in
\textheight 8.0in
\oddsidemargin 0.1in
\evensidemargin 0.1in

\usepackage{hyperref}
\usepackage{amssymb}
\usepackage{caption}
\usepackage{subcaption}
\usepackage{amsmath}
\usepackage{amsfonts}
\usepackage{amsthm}
\usepackage{stmaryrd}
\usepackage{algorithm, algorithmic,refcount}
\usepackage{tkz-euclide}

\usepackage{epstopdf}
\usepackage{mathdots}
\usepackage{appendix}
\usepackage[shortlabels]{enumitem}
\usetikzlibrary{patterns}
\usepackage{xcolor}
\usepackage{natbib}
\setcitestyle{square}
\setcitestyle{numbers}
\setcitestyle{comma}
\usepackage{graphicx}

\newtheorem{theorem}{Theorem}
\newtheorem{example}[theorem]{Example}
\newtheorem{corollary}[theorem]{Corollary}
\newtheorem{lemma}[theorem]{Lemma}
\newtheorem{remark}[theorem]{Remark}
\newtheorem{definition}[theorem]{Definition}
\newtheorem{assumption}[theorem]{Assumption}

\title{A sublinear-time randomized algorithm for column and row subset selection based on strong rank-revealing QR factorizations}
\author{Alice Cortinovis\footnote{Department of Mathematics, Stanford University, CA, USA. Email: \texttt{alicecor@stanford.edu}} \and Lexing Ying\footnote{Department of Mathematics and ICME, Stanford University, CA, USA. Email: \texttt{lexing@stanford.edu}}}

\date{}

\begin{document}
	
	\maketitle
	
	\begin{abstract}
		In this work, we analyze a sublinear-time algorithm for selecting a few rows and columns of a matrix for low-rank approximation purposes. The algorithm is based on an initial uniformly random selection of rows and columns, followed by a refinement of this choice using a strong rank-revealing QR factorization. We prove bounds on the error of the corresponding low-rank approximation (more precisely, the CUR approximation error) when the matrix is a perturbation of a low-rank matrix that can be factorized into the product of matrices with suitable incoherence and/or sparsity assumptions.
	\end{abstract}
	
	\section{Introduction}
	Many large-scale matrices arising in applications have a low numerical rank (see, e.g.,~\cite{Udell2019}), and while the truncated singular value decomposition gives a way to construct the \emph{best} low-rank approximation with respect to all unitarily invariant norms (by the Eckart-Young-Mirsky theorem~\cite{Eckart1936, Mirsky1960}), this is often too expensive to compute. For this reason, different types of low-rank approximation strategies have been analyzed in the literature, for example, approximations constructed from some rows and columns of the matrix. This work is concerned with the analysis of a randomized algorithm that selects suitable rows and columns.
	
	Let $A \in \mathbb{R}^{n \times m}$ be the matrix we want to approximate. 
	Let us denote by $I \in \{1, \ldots, n\}^\ell$ and $J \in \{1, \ldots, m\}^\ell$  ordered index sets that correspond to rows and columns of $A$, respectively, for some $\ell \ll \min \{m,n\}$, and let us denote by $A(I,:) \in \mathbb{R}^{\ell \times m}$ and $A(:,J) \in \mathbb{R}^{n \times \ell}$ the submatrices of $A$ corresponding to the rows indexed by $I$ and the columns indexed by $J$, respectively. An approximation of $A$ using these rows and columns has the form
	\begin{equation}\label{eq:lowrank}
		A \approx A(:,J) M A(I,:),
	\end{equation}
	for some matrix $M \in \mathbb{R}^{\ell \times \ell}$. The choice of $M$ that minimizes the low-rank approximation error $\|A - A(:,J) M A(I,:)\|_F$ in the Frobenius norm (given a choice of $I$ and $J$) is the orthogonal projection $M = A(:,J)^{\dagger} A A(I,:)^{\dagger}$ (see, e.g.,~\cite{Stewart1999}), where $\dagger$ denotes the Moore-Penrose pseudoinverse of a matrix. The resulting approximation is usually called a ``CUR approximation''~\cite{Drineas2008,Saibaba2016,Sorensen2016}. In this case, the whole matrix $A$ is needed in order to compute $M$. A less expensive choice for $M$ is $M = A(I,J)^\dagger$, or a regularized version of this quantity,  
	where $A(I,J)$ denotes the $\ell \times \ell$ matrix formed by the intersection of the rows indexed by $I$ and the columns indexed by $J$. This is called ``cross approximation''~\cite{Bebendorf2000} or  ``double-sided interpolatory decomposition''~\cite{Dong2023}, because the resulting low-rank approximation  $A(:, J)A(I, J)^\dagger A(I, :)$ coincides with $A$ in the selected rows and columns.
	
	\subsubsection*{Row and column selection strategies} 
	The quality of the low-rank approximation, that is, the norm of the error matrix  $$A - A(:,J)MA(I,:),$$ depends on the choice of rows and columns. The CUR approximation error can be easily related to the column subset selection error and the row subset selection error:
	\begin{equation*}
		\|A - A(:,J)A(:,J)^\dagger A\|, \qquad \|A - A A(I,:)^\dagger A(I,:)\|.
	\end{equation*}
	If we consider the spectral norm, which we denote by $\| \cdot \|$, it is possible~\cite{Gu1996} to choose an index set $J$ of cardinality $\ell$ such that 
	\begin{equation}\label{eq:bestspectral}
		\|A - A(:,J)A(:,J)^\dagger A\| \le \sqrt{(\ell+1)(n-\ell)} \sigma_{\ell+1}(A),
	\end{equation}
	where $\sigma_{\ell+1}(A)$ denotes the $(\ell+1)$-th singular value of $A$ and is the error of the best rank-$\ell$ approximation in the spectral norm.
	
	In practice, the strategy for choosing rows and columns depends on the properties and the size of the matrix.  In the literature, several deterministic and randomized strategies for selecting rows and columns for CUR approximation have been developed.  If it is feasible to look at all the entries of the matrix, one option is the Adaptive Cross Approximation algorithm~\cite{Bebendorf2000}, a deterministic greedy algorithm that gives good results in practice on a wide range of matrices but can fail for a few others (see, e.g.,~\cite{CKM2020}) and takes time $O(nm\ell)$ for a rank-$\ell$ approximation. We refer to the reader to, e.g.,~\cite{Cortinovis2020, Drineas2008,Saibaba2016,Sorensen2016}, for discussions on deterministic techniques for column subset selection and CUR approximation.  
	
	In this paper, we are concerned with the case in which the matrix is so large that it is not possible (or it is impractical) to store it in memory or compute all its entries: randomized algorithms for sampling columns and rows come to the rescue. Among the strategies proposed in the literature, there are uniform sampling~\cite{Chiu2013}, sampling according to the norm of columns and rows of $A$~\cite{Frieze2004}, leverage score sampling~\cite{Cohen2017}, volume sampling~\cite{Cortinovis2020,Deshpande2006}, pivoting on random sketches~\cite{Dong2023,Voronin2017}, discrete empirical interpolation method~\cite{Sorensen2016}. 
	
	\subsubsection*{Uniform sampling and rank-revealing factorizations}
	The simplest method, uniform sampling, gives good low-rank approximations in many cases of interest. More specifically, Chiu and Demanet \cite{Chiu2013} proved that if $A \approx XZY^T$ for some rank-$k$ matrices $X \in \mathbb{R}^{n\times k}, Y \in \mathbb{R}^{m \times k}, Z \in \mathbb{R}^{k \times k}$, where $X$ and $Y$ have orthonormal columns if $X$ and $Y$ are \emph{incoherent} (see Definition~\ref{def:incoherent}) then selecting $\ell$ rows and columns of $A$ uniformly at random gives a good low-rank approximation of $A$, if $\ell$ is chosen to be mildly larger than the target rank $k$. 
	
	When the matrix $A$ does not satisfy these incoherence assumptions, uniform column and row sampling can still be an excellent starting point for constructing a low-rank approximation of $A$ in sublinear time. In~\cite[Section 3.2]{Chiu2013}, the case in which only $X$ is incoherent is discussed: in this situation, a uniformly random row selection is followed by a strong rank-revealing QR (sRRQR) factorization of these rows to identify suitable columns. The resulting low-rank approximation is provably close to optimal. 
	
	A similar, more heuristic, approach was suggested in~\cite[Algorithm 2.2]{Li2015}: starting from index sets $I$ and $J$ of rows and columns sampled uniformly at random, one alternately considers the rows $A(I,:)$ and the columns $A(:,J)$ and applies a rank-revealing algorithm (QR with column pivoting) to get a new -- hopefully better -- set of columns and rows, respectively.  While this latter algorithm has been seen to perform very well in practice~\cite{Li2015}, the available theoretical results are not able to explain its behavior beyond the assumptions of~\cite{Chiu2013}.
	
	\subsubsection*{Contributions} In this work, we consider and analyze a variation of the column and row selection strategies described above. Namely, given a matrix $A$, we first select some rows uniformly at random, then perform an sRRQR factorization on these rows to get columns, and add some more columns selected uniformly at random to get the final column index set. An analogous strategy is used to select a row index set. The procedure is summarized in Algorithm~\ref{alg:variation}. We prove error bounds on the low-rank approximation given by Algorithm~\ref{alg:variation} under the assumption that $A$ is a perturbation of a low-rank matrix $XZY^T$, for some $X \in \mathbb{R}^{n \times k}$, $Z \in \mathbb{R}^{k \times k}$, $Y \in \mathbb{R}^{m \times k}$, where some of the columns of $X$ and $Y$ are sparse, and the other ones are incoherent. The precise statement is in Corollary~\ref{cor:variation}. In addition, we show that if there is no perturbation, that is, the matrix $A$ has rank exactly $k$, then Algorithm~\ref{alg:variation} exactly recovers $A$ with high probability.
	
	We also consider a second Algorithm (Algorithm~\ref{alg:basic}), inspired by~\cite[Algorithm 2.2]{Li2015}, that refines the choice of rows and columns iteratively. Under the same assumptions on $A$, Theorem~\ref{thm:1step} gives an error bound for the low-rank approximation corresponding to one iteration of this method.
	
	Both Algorithm~\ref{alg:variation} and Algorithm~\ref{alg:basic} -- for the selection of row and column index sets -- run in sublinear time with respect to the size of the matrix $A$. The construction of the corresponding CUR approximation, however, requires access to the full matrix $A$ or to a black-box computation of matrix-vector products with $A$.
	
	\subsubsection*{Outline} 
	The paper is organized as follows. In Section~\ref{sec:rrqrintro}, we recall the notion of a strong rank-revealing QR factorization and describe the randomized sRRQR algorithm for column and row subset selection, together with some conceptual examples. In Section~\ref{sec:preliminary}, we report some existing results on incoherence and projection onto column spaces that will be useful for our analysis. The error analysis for Algorithm~\ref{alg:variation} is presented in Sections~\ref{sec:exact} and~\ref{sec:approx}. In Section~\ref{sec:iterative}, we discuss the iterative version of the randomized sRRQR algorithm (Algorithm~\ref{alg:basic}) and show some numerical examples and an error bound. Section~\ref{sec:conclusion} concludes our paper with a discussion of the strengths and limitations of the obtained results.
	
	\section{A randomized sRRQR algorithm for column/row selection}\label{sec:rrqrintro}
	
	We start this section by recalling the definition of an sRRQR factorization and some consequences; we explain the randomized sRRQR algorithm and show its behavior on some simple example matrices.
	
	\subsection{Strong rank-revealing QR factorization}

	\begin{definition}[Strong rank-revealing QR factorization~\cite{Gu1996}]
		A rank-$k$ strong rank-revealing QR (sRRQR) factorization of $A \in \mathbb{R}^{n \times m}$ is given by
		\begin{equation*}
			AP = Q \begin{bmatrix} A_k & B_k \\ 0 & C_k \end{bmatrix}, 
		\end{equation*}
		where $P \in \mathbb{R}^{m \times m}$ is a permutation matrix, $Q \in \mathbb{R}^{n \times n}$ is an orthogonal matrix, $A_k \in \mathbb{R}^{k \times k}$, $B_k \in \mathbb{R}^{k \times (m-k)}$, $C_k \in \mathbb{R}^{(n-k)\times(m-k)}$, for which the following conditions hold for some $\eta \ge 1$:
		\begin{enumerate}[(a)]
			\item $\sigma_i(A_k) \ge \frac{\sigma_i(A)}{\sqrt{1+\eta k(n-k)}}$ for $1 \le i \le k$;
			\item $\sigma_j(C_k) \le \sigma_{k+j}(A)\sqrt{1+\eta k(n-k)}$ for $1 \le j \le n-k$;
			\item $\|A_k^{-1} B_k\|_{\max} \le 1$.
		\end{enumerate}
	\end{definition}
	
	Here, $\| \cdot \|_{\max}$ denotes the Chebyshev norm of a matrix, that is, the maximum absolute value of its entries.
	Intuitively, if the singular values $\sigma_{k+1}(A), \ldots, \sigma_m(A)$ of $A$ are small, this means that the columns of $A$ corresponding to the first $k$ columns of $P$ well describe the range of \emph{all} the columns of $A$. More precisely, let $J \in \{1, \ldots, n\}^k$ be the index set corresponding to the first $k$ columns of $P$. Then
	\begin{equation*}
		\|A - A(:,J)A(:,J)^\dagger A\| = \|C_k\| \le \sigma_{k+1}(A) \sqrt{1 + \eta k(n-k)}.
	\end{equation*}
	
	The paper~\cite{Gu1996} contains an algorithm for obtaining a sRRQR factorization of rank $k$, which runs in time $\mathcal O\left (mnk \log_{\eta}(\min\{n,m\})\right )$ for $\eta > 1$; see the discussion in~\cite[Section 4.4]{Gu1996}. In our error analysis in Sections~\ref{sec:erroranalysis} and~\ref{sec:iterative}, for ease of notation, we assume $\eta = 1$, but all the results can be easily modified to account for a small fixed constant $\eta$ (e.g., $\eta=2$).
	
	We choose to work with the sRRQR factorization because of its strong theoretical guarantees. Indeed, while pivoted LU or pivoted QR factorizations with column pivoting are slightly less computationally expensive and are often good strategies for finding the numerical rank of a matrix, there are pathological examples for which these algorithms fail to select a suitable set of columns. 
	
	\subsection{The algorithm}\label{sec:algorithm}
	
	We consider the following procedure for selecting rows and columns of a matrix $A \in \mathbb{R}^{n \times m}$ in sublinear time. We start from $\ell_0$ randomly selected rows of $A$, corresponding to an index set $I_0$. We perform the sRRQR algorithm on the submatrix $A(I_0,:) \in \mathbb{R}^{\ell_0 \times m}$ with parameter $\ell_a$, and select a column index set $J$ made of the corresponding columns selected by sRRQR and another $\ell_b$ columns selected uniformly at random. Now, to select the rows $I$, the same procedure is performed on the matrix $A^T$. The cost of the algorithm is $\mathcal O\left (mn\ell \log_{\eta}(\min\{n,m\})\right )$, where we set $\ell = \max\{\ell_0,\ell_a, \ell_b\}$; in particular, for small values of $\ell$ this is sublinear in the size of $A$. The number of chosen rows and columns should be slightly larger than the target rank for the low-rank approximation for this algorithm to make sense.
	
	\begin{algorithm}
		\begin{algorithmic}[1]
			\REQUIRE{Matrix $A$, number of indices $\ell_0, \ell_a, \ell_b$}
			\ENSURE{Row index set $I$ and column index set $J$ of cardinality $\ell_a + \ell_b$}
			\STATE{Select $\ell_0$ rows of $A$ uniformly at random (index set $I_0$)}
			\STATE{Select $\ell_a$ columns of $A(I,:)$ by sRRQR algorithm (index set $J_a$)}
			\STATE{Select another $\ell_b$ columns of $A$ uniformly at random (index set $J_b$)}
			\STATE{Select $\ell_0$ rows of $A^T$ uniformly at random (index set $J_0$)}
			\STATE{Select $\ell_a$ columns of $A^T(J_0,:)$ by sRRQR algorithm (index set $I_a$)}
			\STATE{Select another $\ell_b$ columns of $A^T$uniformly at random (index set $I_b$)}
			\STATE{Return the column index set $J = (J_a, J_b)$ and the row index set $I = (I_a, I_b)$}
			
		\end{algorithmic}
		\caption{Randomized sRRQR for row and column selection}
		\label{alg:variation}
	\end{algorithm}
	
	\begin{remark}
		Instead of choosing $\ell_a$ a priori, a more practical implementation would run sRRQR with a fixed accuracy and eliminate the ``less interesting'' rows/columns.
	\end{remark}
	
	\begin{remark}
		While sampling row and column index sets as in Algorithm~\ref{alg:variation} takes sublinear time in the matrix dimension, the process of computing the low-rank approximation 
		\begin{equation}\label{eq:proj}
			A(:,J)A(:,J)^\dagger A A(I,:)^\dagger A(I,:)
		\end{equation}
		exactly is fast only if the matrix $A$ admits a fast matrix-vector multiplication routine. If this is not the case, the time $\mathcal O(mn\ell)$ needed to compute~\eqref{eq:proj} may be too much in practice. A possible heuristic is then to sample some larger index sets $\mathcal I$ and $\mathcal J$ and take the middle matrix, which should be $A(:,J)^\dagger A A(I,:)^\dagger$, to be
		\begin{equation*}
			A(\mathcal I, J)^\dagger A(\mathcal I, \mathcal J) A(I, \mathcal J)^\dagger
		\end{equation*}
		instead, which is faster to compute; see Step 6 in Algorithm 2.2 from~\cite{Li2015}.
	\end{remark}

	\subsection{When is there hope for Algorithm~\ref{alg:variation} to work?}\label{sec:hope}
	
	We begin the discussion by trying to understand in which cases it is reasonable to hope that Algorithm~\ref{alg:variation} gives good rows and columns of $A$. We present some examples to illustrate its behavior. 
	
	When the matrix $A$ can be decomposed as the product of $XZY^T + E$ for some matrices $X \in \mathbb{R}^{n \times k}$ and $Y \in \mathbb{R}^{m \times k}$ with orthonormal columns and some $Z \in \mathbb{R}^{k \times k}$ and $E \in \mathbb{R}^{n \times m}$ with $\|E\|$ small, then it was proven in~\cite{Chiu2013} that if $X$ is incoherent, the first two lines of Algorithm~\ref{alg:variation} give provably good columns. Algorithm~\ref{alg:variation} works on a larger class of matrices, as we will show in Theorem~\ref{thm:exact}, Theorem~\ref{thm:approx}, and Corollary~\ref{cor:variation} below.

	On the other hand, there are cases in which Algorithm~\ref{alg:variation} is not able to select a good row and column subset, for instance, when the matrix $A$ is approximately a block matrix, and some blocks have a small size. The following is a simple conceptual example.
	
	\begin{example}\label{ex:notwork}
		Consider a matrix $A \in \mathbb{R}^{n \times n}$ that has entries $a_{11} = 1$, $a_{ij} = 1$ for $2 \le i,j \le n$, and $0$ otherwise. The matrix has rank $2$, but it is nearly impossible for Algorithm~\ref{alg:variation} to end up including the first row and column in the selected index sets $I$ and $J$. Indeed, the initial selection of $\ell_0$ rows will likely not include the first row, and since the first column will be zero in the selected submatrix, the first column will not be included in the column index set $J_a$. An analogous argument holds for the row index set $I$. 
	\end{example}
	
	The following example highlights the advantages of Algorithm~\ref{alg:variation} over a uniformly random selection of rows and columns and motivates the necessity for adding some new uniformly random indices in lines 3 and 6 of Algorithm~\ref{alg:variation}.
	
	\begin{example}\label{ex:rank2}
		Consider the matrix $A \in \mathbb{R}^{n \times n}$ that has entries $a_{1j} = a_{j1} = 1$ for $1 \le j \le n$ and zeros elsewhere. The row set $I_0$, chosen uniformly at random, will most likely not include the first row. However, when looking at the matrix $A(I_0,:)$, the sRRQR algorithm will select a set $J_a$ containing the first column, plus some other $\ell_a-1$ columns sampled uniformly at random. Now, the set $J$ will contain the first column and at least another column; therefore, it is enough to represent the range of $A$. An analogous argument holds for the row index set $I$.
	\end{example}
	
	Note that the matrix from Example~\ref{ex:rank2} has the following decomposition:
	\begin{equation*}
		A = \begin{bmatrix} \frac{1}{\sqrt n} & 1 \\ \frac{1}{\sqrt n} & 0 \\ \frac{1}{\sqrt n} & 0 \\ \vdots & \vdots \\ \frac{1}{\sqrt n} & 0  \end{bmatrix} \begin{bmatrix} \sqrt{n} & 0 \\ 0 & \sqrt{n-1} \end{bmatrix} \begin{bmatrix} 1 & 0 & 0 & \cdots & 0 \\ 0 & \frac{1}{\sqrt{n}} & \frac{1}{\sqrt{n}} & \cdots & \frac{1}{\sqrt{n}} \end{bmatrix} = X Z Y^T,
	\end{equation*}
	where, for each $j=1,2$, one between the $j$-th column of $X$ and $Y$ is sparse and the other is incoherent. This example suggests that when a matrix has a rank-$k$ decomposition $A = XZY^T$ up to an additive error $E$, there is hope for Algorithm~\ref{alg:variation} to work if, for each $i=1,\ldots,k$, one between the $i$-th columns of $X$ and of $Y$ is sparse, and the other is incoherent. 
	
	Motivated by the examples shown in this section, it makes sense to think that Algorithm~\ref{alg:variation} could return good row and column sets with high probability if $A$ admits a rank-$k$ approximation $XZY^T$, for some $X \in \mathbb{R}^{n \times k}$ and $Y \in \mathbb{R}^{m \times k}$, where the pairs of vectors of $X$ and $Y$ are either both incoherent or one sparse and one incoherent. Such a decomposition does not need to coincide with a singular value decomposition (or its truncation) of the matrix $A$. The assumptions are made precise in Section~\ref{sec:erroranalysis} below.
	
	\section{Error analysis of Algorithm~\ref{alg:variation}}\label{sec:erroranalysis}
	
	\subsection{Incoherence and useful preliminary results}\label{sec:preliminary}
	
	\begin{definition}\label{def:incoherent}[{\cite[Definition 1.1]{Chiu2013}}]
		Given a matrix $X \in \mathbb{R}^{n \times k}$ with orthonormal columns, the coherence of $X$ is $\mu := n \|X\|_{\max}^2.$ We say that $X$ is $\mu$-coherent.
	\end{definition}
	
	The coherence of an $n \times k$ matrix is always in the interval $[1, n]$, being $n$ when $X$ is a subset of columns of the identity matrix and being close to $1$ when the magnitude of all the entries of $X$ is more or less the same. We informally say that $X$ is incoherent if its coherence is small.
	
	In our analysis, we will use the following result, which states that randomly selecting rows from an incoherent matrix gives us a well-conditioned submatrix with high probability.
	\begin{theorem}[{\cite[Lemma 3.4]{Tropp2011}}]\label{thm:incoherent}
		Let $X \in \mathbb{R}^{n \times k}$ be a matrix with orthonormal columns, which is $\mu$-coherent. Let $\ell \ge \alpha k \mu$, and let $I \in \{1, \ldots, n\}^\ell$ be a subset of row indices chosen uniformly at random. Then
		\begin{equation*}
			\mathbb{P} \left ( \|X(I,:)^\dagger\| \ge \sqrt{\frac{n}{(1-\delta)\ell}}\right ) \le k \left ( \frac{e^{-\delta}}{(1-\delta)^{1-\delta}}\right )^\alpha \qquad \text{for any }\delta \in [0, 1)
		\end{equation*}
		and
		\begin{equation*}
			\mathbb{P} \left ( \|X(I,:)\| \ge \sqrt{\frac{(1+\delta')\ell}{n}}\right ) \le k \left ( \frac{e^{\delta'}}{(1+\delta')^{1+\delta'}}\right )^\alpha \qquad \text{for any }\delta' \ge 0.
		\end{equation*}
	\end{theorem}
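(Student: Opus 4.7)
My plan is to derive both bounds from a matrix Chernoff inequality applied to the random sum $S := X(I,:)^T X(I,:) = \sum_{j=1}^{\ell} x_{i_j} x_{i_j}^T$, where $x_i^T$ denotes the $i$-th row of $X$ and $i_1,\dots,i_\ell$ are the (uniformly sampled) row indices. The key reduction is that $\|X(I,:)\|^2 = \lambda_{\max}(S)$ and $\|X(I,:)^\dagger\|^2 = 1/\lambda_{\min}(S)$, so both claims become two-sided eigenvalue concentration statements for $S$.

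First I would compute the two ingredients that matrix Chernoff needs. Since $X$ has orthonormal columns, $\sum_{i=1}^{n} x_i x_i^T = X^T X = I_k$, and each summand $x_{i_j} x_{i_j}^T$ has expectation $\frac{1}{n} I_k$, giving $\mathbb{E}[S] = \frac{\ell}{n} I_k$ and therefore $\lambda_{\min}(\mathbb{E}[S]) = \lambda_{\max}(\mathbb{E}[S]) = \ell/n$. For the uniform bound on each summand, coherence gives $\|X\|_{\max}^2 = \mu/n$ and hence
\begin{equation*}
\|x_{i_j} x_{i_j}^T\| = \|x_{i_j}\|_2^2 \le k \|X\|_{\max}^2 = \frac{k\mu}{n} =: R
\end{equation*}
almost surely. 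The crucial ratio in the Chernoff exponent is then $\lambda_{\min}(\mathbb{E}[S])/R = \ell/(k\mu) \ge \alpha$ by the assumption $\ell \ge \alpha k \mu$.

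Next I would apply the two standard matrix Chernoff tail bounds (for sums of independent PSD matrices bounded by $R$ in operator norm):
\begin{equation*}
\mathbb{P}\bigl(\lambda_{\min}(S) \le (1-\delta)\tfrac{\ell}{n}\bigr) \le k \left(\frac{e^{-\delta}}{(1-\delta)^{1-\delta}}\right)^{\ell/(k\mu)}, \quad
\mathbb{P}\bigl(\lambda_{\max}(S) \ge (1+\delta')\tfrac{\ell}{n}\bigr) \le k \left(\frac{e^{\delta'}}{(1+\delta')^{1+\delta'}}\right)^{\ell/(k\mu)}.
\end{equation*}
Using $\ell/(k\mu) \ge \alpha$ in the exponents (the base is less than $1$ so making the exponent smaller only weakens the bound in the right direction) gives exactly the probability bounds stated in the theorem. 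Translating the eigenvalue events via $\|X(I,:)^\dagger\| \ge \sqrt{n/((1-\delta)\ell)} \Leftrightarrow \lambda_{\min}(S) \le (1-\delta)\ell/n$ and $\|X(I,:)\| \ge \sqrt{(1+\delta')\ell/n} \Leftrightarrow \lambda_{\max}(S) \ge (1+\delta')\ell/n$ finishes both claims.

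The main conceptual obstacle is a modeling issue rather than a computation: Algorithm~\ref{alg:variation} samples an index set, which most naturally corresponds to sampling without replacement, while the cleanest matrix Chernoff bound above is for independent (with-replacement) sums. I would handle this either by noting that the theorem statement $I \in \{1,\dots,n\}^\ell$ allows repeats, or by invoking Tropp's matrix Chernoff bound for sums of random matrices sampled without replacement from a finite collection, which yields the same tail in this setting. Beyond that, everything is routine: the coherence assumption is exactly calibrated so that the per-summand bound $R$ interacts with $\ell \ge \alpha k \mu$ to produce the clean exponent $\alpha$ in the final probabilities.
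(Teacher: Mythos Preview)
The paper does not give its own proof of this theorem: it is quoted verbatim from Tropp's work (cited as \cite[Lemma~3.4]{Tropp2011}) and used as a black box. Your proposal correctly reconstructs the standard argument behind that lemma---applying the matrix Chernoff bound to $S=\sum_j x_{i_j}x_{i_j}^T$, with the coherence assumption giving the per-summand bound $R=k\mu/n$ and the hypothesis $\ell\ge\alpha k\mu$ producing the exponent $\alpha$---so there is nothing to compare; you have supplied the proof the paper merely imports.

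One small caution on the translation step: the identity $\|X(I,:)^\dagger\|^2=1/\lambda_{\min}(S)$ holds only when $X(I,:)$ has full column rank. On the rank-deficient event $\lambda_{\min}(S)=0$ the pseudoinverse norm is $1/\sigma_r(X(I,:))$ for $r<k$ and need not exceed your threshold, so the two events are not literally equivalent. The clean fix is to phrase the lower-tail conclusion as a bound on $\mathbb{P}\bigl(\sigma_k(X(I,:))\le\sqrt{(1-\delta)\ell/n}\bigr)$, which is exactly what the Chernoff bound delivers and is how the paper actually uses the result (cf.\ Corollary~\ref{cor:fullrank}, obtained by sending $\delta\to 1$). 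Your discussion of the with/without-replacement issue is appropriate; Tropp's matrix Chernoff for sampling without replacement covers the case needed here.
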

	
	\begin{corollary}\label{cor:fullrank}
		Let $B \in \mathbb{R}^{n \times k}$ be a matrix, and let $X$ be an orthonormal basis of the column space of $B$. Let us assume that $X$ is $\mu$-coherent. Let $\ell \ge \alpha k \mu$, and let $I \in \{1, \ldots, n\}^\ell$ be a subset of row indices of $X$ chosen uniformly at random. Then the submatrix $B(I,:)$ is invertible with probability at least $1-ke^{-\alpha}$.
	\end{corollary}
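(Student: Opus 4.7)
The plan is to reduce the statement to the incoherent matrix $X$ and then invoke Theorem~\ref{thm:incoherent} with the parameter $\delta$ pushed to the boundary.

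First, since $X$ is an orthonormal basis for the column space of $B \in \mathbb{R}^{n \times k}$, there exists an invertible matrix $T \in \mathbb{R}^{k \times k}$ such that $B = XT$. Restricting to the rows indexed by $I$ gives $B(I,:) = X(I,:)\,T$, and because $T$ is invertible, $B(I,:)$ has full column rank (that is, is left-invertible) if and only if $X(I,:)$ does. So it suffices to bound the probability that $X(I,:) \in \mathbb{R}^{\ell \times k}$ has rank $k$.

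Next, I would apply the first inequality of Theorem~\ref{thm:incoherent} to the $\mu$-coherent orthonormal matrix $X$. For any fixed $\delta \in [0,1)$, the complementary event
\[
\|X(I,:)^\dagger\| < \sqrt{\tfrac{n}{(1-\delta)\ell}}
\]
forces $\sigma_k(X(I,:)) > 0$, hence $X(I,:)$ has full column rank. Thus the probability that $X(I,:)$ fails to have full column rank is bounded above by
\[
k \left(\frac{e^{-\delta}}{(1-\delta)^{1-\delta}}\right)^{\!\alpha}
\]
for every $\delta \in [0,1)$.

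Finally, letting $\delta \to 1^-$ and using $(1-\delta)^{1-\delta} \to 1$ together with $e^{-\delta} \to e^{-1}$, the right-hand side tends to $k\,e^{-\alpha}$. Since the failure probability is bounded above by this quantity for all $\delta < 1$, taking the infimum yields the claimed bound $k\,e^{-\alpha}$. Combined with the reduction $B(I,:) = X(I,:)T$ from the first step, this completes the proof. The argument is largely mechanical; the only minor point requiring care is the limit $\delta \to 1^-$, which is why the first step of Theorem~\ref{thm:incoherent} is stated for $\delta$ strictly less than $1$ rather than equal to $1$.
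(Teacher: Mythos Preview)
Your proposal is correct and follows essentially the same approach as the paper, which simply states that the corollary follows from the first part of Theorem~\ref{thm:incoherent} by taking the limit $\delta \to 1$. You have merely filled in the details the paper omits: the reduction from $B$ to its orthonormal basis $X$ via $B = XT$, the observation that a finite $\|X(I,:)^\dagger\|$ forces full column rank, and the explicit verification that $(1-\delta)^{1-\delta}\to 1$ so the bound tends to $ke^{-\alpha}$.
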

	
	\begin{proof}
		This follows from the first part of Theorem~\ref{thm:incoherent} by taking the limit $\delta \to 1$.
	\end{proof}

	We will also use the following bound for the approximation of a matrix $A$ by the projection on a subset of its columns.
	\begin{theorem}[{\cite[Corollary 2.4]{Chiu2013}}]\label{thm:chiu}
		Let $A \in \mathbb{C}^{n \times m}$ and let $U = \begin{bmatrix} U_1 & U_2 \end{bmatrix}$  be a unitary matrix such that $U_1$ has $k$ columns. Let $J$ be an index set of $\ell \ge k$ indices. Assume that $U_1(:,J)$ has full column rank. Then
		\begin{equation*}
			\| A - A(:,J)A(:,J)^\dagger A \| \le \| U_1(:,J)^\dagger\| \cdot \| A U_2 U_2(:,J)^T\| + \|A U_2 \|.
		\end{equation*}
	\end{theorem}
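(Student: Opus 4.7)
The plan is to exploit the unitary resolution $I_m = U_1 U_1^* + U_2 U_2^*$ to split the residual of the projection $P := A(:,J) A(:,J)^\dagger$ into a piece driven by $U_1$ and a piece driven by $U_2$. Writing $A = A U_1 U_1^* + A U_2 U_2^*$ and applying $I - P$ gives
\begin{equation*}
A - P A = (I - P)\, A U_1 U_1^* + (I - P)\, A U_2 U_2^*.
\end{equation*}
The second piece is essentially trivial: since $U_2$ has orthonormal columns we have $\|A U_2 U_2^*\| = \|A U_2\|$, and $I - P$ is a contraction, so this piece contributes at most $\|A U_2\|$, matching the second term in the claimed bound.

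For the first piece, the idea is to connect $A U_1$ to the selected columns $A(:,J)$. Restricting the identity $A = A U_1 U_1^* + A U_2 U_2^*$ to the columns indexed by $J$ yields
\begin{equation*}
A(:,J) = A U_1 \cdot U_1(:,J)^{*} + A U_2 \cdot U_2(:,J)^{*}.
\end{equation*}
The full column rank hypothesis on $U_1(:,J) \in \mathbb{C}^{\ell \times k}$ is exactly what makes $U_1(:,J)^{*}$ right-invertible, with right inverse $U_1(:,J)^{\dagger *}$ whose spectral norm equals $\|U_1(:,J)^\dagger\|$. Right-multiplying the previous display by this right inverse and solving for $A U_1$ produces
\begin{equation*}
A U_1 = A(:,J)\, U_1(:,J)^{\dagger *} - A U_2\, U_2(:,J)^{*}\, U_1(:,J)^{\dagger *}.
\end{equation*}

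Applying $I - P$ on the left kills the $A(:,J)$ term, since its columns lie in the range of $A(:,J)$. Taking spectral norms and using submultiplicativity therefore gives
\begin{equation*}
\|(I - P)\, A U_1\| \le \|U_1(:,J)^\dagger\| \cdot \|A U_2\, U_2(:,J)^{*}\|,
\end{equation*}
and post-multiplying by $U_1^*$ (a contraction) does not increase this bound. Combining with the $U_2$ piece via the triangle inequality completes the proof. The main obstacle is the algebraic identity that pulls $A U_1$ back from $A(:,J)$ at the cost of an $A U_2$ correction; once one sees that the full column rank assumption on $U_1(:,J)$ is tailor-made to make $U_1(:,J)^{*}$ right-invertible, the rest is bookkeeping with $U_1^* U_1 = I$ and $U_2^* U_2 = I$.
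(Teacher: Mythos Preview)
The paper does not give its own proof of this statement; it is quoted verbatim as \cite[Corollary 2.4]{Chiu2013} and used as a black box in Step~\#5 of the proof of Theorem~\ref{thm:approx}. Your argument is correct and is essentially the standard proof: split $A$ via the unitary resolution $I = U_1 U_1^* + U_2 U_2^*$, observe that the $U_2$-part contributes $\|AU_2\|$ because $I-P$ is a contraction, and for the $U_1$-part use the identity $A(:,J) = AU_1\,U_1(J,:)^* + AU_2\,U_2(J,:)^*$ together with the right-invertibility of $U_1(J,:)^*$ to express $AU_1$ as a column of $A(:,J)$ plus a correction, so that $(I-P)$ annihilates the first term. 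One small remark: the paper's statement writes $U_1(:,J)$ and $U_2(:,J)^T$, but as you implicitly recognized (and as the paper itself does when it applies the result in Step~\#5, writing $U_1(J,:)$), these must be read as the \emph{rows} of $U_1, U_2$ indexed by $J$; your $\ell\times k$ shape for $U_1(:,J)$ reflects that correct interpretation.
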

	
	The following linear algebra results will be helpful as well.
	
	\begin{theorem}[{\cite[Exercise 7.3.P16]{Horn2013}}]\label{thm:singval}
		For any matrices $F$ and $G$ (of compatible size) we have
		\begin{enumerate}[(a)]
			\item $\sigma_i(FG) \le \|F\| \sigma_i(G)$ for all $i$, and
			\item $\sigma_{i+j-1}(F+G) \le \sigma_i(F) + \sigma_j(G)$ for all $i$ and $j$.
		\end{enumerate}
	\end{theorem}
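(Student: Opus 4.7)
The plan is to prove both inequalities using the characterization of singular values as distances to low-rank matrices: for any matrix $A$ and any positive integer $k$,
\begin{equation*}
\sigma_k(A) = \min \{ \|A - B\| : \operatorname{rank}(B) < k \},
\end{equation*}
which is the Eckart--Young theorem in the spectral norm. With this as a starting tool, both parts follow by exhibiting explicit low-rank matrices that approximate the left-hand side well.

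For part (a), I would let $G_{i-1}$ denote a best rank-$(i-1)$ approximation of $G$ in the spectral norm, so that $\|G - G_{i-1}\| = \sigma_i(G)$. Then $FG_{i-1}$ has rank at most $i-1$, and by Eckart--Young applied to $FG$,
\begin{equation*}
\sigma_i(FG) \le \|FG - FG_{i-1}\| \le \|F\| \cdot \|G - G_{i-1}\| = \|F\| \sigma_i(G),
\end{equation*}
where the middle step uses submultiplicativity of the spectral norm. That handles (a) in one line.

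For part (b), I would similarly take best low-rank approximations $F_{i-1}$ and $G_{j-1}$ of ranks at most $i-1$ and $j-1$, respectively. Their sum has rank at most $i+j-2$, so applying Eckart--Young to $F+G$ at index $i+j-1$ gives
\begin{equation*}
\sigma_{i+j-1}(F+G) \le \|(F+G) - (F_{i-1} + G_{j-1})\| \le \|F - F_{i-1}\| + \|G - G_{j-1}\| = \sigma_i(F) + \sigma_j(G),
\end{equation*}
where the triangle inequality is used in the middle. The only technicality is that the indices $i, j$ must be such that $i+j-1$ does not exceed the number of singular values of $F+G$; for larger indices the inequality is interpreted with the convention that the corresponding singular value is zero, and the statement is trivial. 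I do not expect a substantial obstacle: once the Eckart--Young characterization is in hand, both parts are immediate, and the argument is completely standard. The main thing to be careful about is bookkeeping of the ranges of $i$ and $j$ and the convention that $\sigma_r(A) = 0$ when $r$ exceeds the smallest matrix dimension.
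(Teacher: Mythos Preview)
Your proof is correct and entirely standard. Note, however, that the paper does not actually prove this theorem: it is simply quoted as a known result from Horn and Johnson (Exercise 7.3.P16), with no argument supplied. Your Eckart--Young-based argument is exactly the classical one and would serve perfectly well as the missing justification.
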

	
	\begin{lemma}\label{lemma:sigmamintriangular}
		Let $X = \begin{bmatrix} A & B \\ 0 & C \end{bmatrix}$ be a block-triangular matrix where $A$, $B$, and $C$ have full column rank. 
		Assume that $\sigma_{\min}(A) \ge \delta_A$, $\|B\|_2 \le b$, and $\sigma_{\min}(C) \ge \delta_C$. Then
		\begin{equation*}
			\sigma_{\min}(X) \ge \frac{\delta_A \delta_C }{b} \left ( 1 + \frac{\delta_A^2 + \delta_C^2}{b^2}\right )^{-1/2}.
		\end{equation*}
	\end{lemma}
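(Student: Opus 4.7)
The plan is to exhibit an explicit left inverse of $X$ whose operator norm can be controlled in terms of $\delta_A$, $\delta_C$, and $b$. Motivated by the standard formula for the inverse of a square invertible block-triangular matrix, I would set
\begin{equation*}
    Y := \begin{bmatrix} A^\dagger & -A^\dagger B C^\dagger \\ 0 & C^\dagger \end{bmatrix}.
\end{equation*}
Since $A$ and $C$ have full column rank, we have $A^\dagger A = I$ and $C^\dagger C = I$, so a direct block multiplication shows that $YX = I$ (the only nontrivial block is $A^\dagger B - A^\dagger B C^\dagger C = A^\dagger B (I - C^\dagger C) = 0$). Therefore, for every nonzero vector $v$ one has $\|v\| = \|YXv\| \le \|Y\|\,\|Xv\|$, which gives $\sigma_{\min}(X) \ge 1/\|Y\|$; this reduces the lemma to bounding $\|Y\|$.

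To estimate $\|Y\|$, I would take a unit vector $u = (u_1, u_2)$ partitioned conformally with $X$ and write
\begin{equation*}
    \|Yu\|^2 = \|A^\dagger u_1 - A^\dagger B C^\dagger u_2\|^2 + \|C^\dagger u_2\|^2.
\end{equation*}
The triangle inequality, submultiplicativity, and the bounds $\|A^\dagger\| \le 1/\delta_A$, $\|B\| \le b$, $\|C^\dagger\| \le 1/\delta_C$ yield
\begin{equation*}
    \|Yu\|^2 \le \frac{1}{\delta_A^2}\left(\|u_1\| + \frac{b}{\delta_C}\|u_2\|\right)^2 + \frac{1}{\delta_C^2}\|u_2\|^2.
\end{equation*}
Setting $x = \|u_1\|$, $y = \|u_2\|$ with $x^2 + y^2 = 1$ and applying Cauchy--Schwarz in the form $(x + \tfrac{b}{\delta_C} y)^2 \le (1 + b^2/\delta_C^2)(x^2 + y^2)$, together with $y^2 \le 1$, gives the uniform bound
\begin{equation*}
    \|Y\|^2 \le \frac{1}{\delta_A^2}\left(1 + \frac{b^2}{\delta_C^2}\right) + \frac{1}{\delta_C^2} = \frac{\delta_A^2 + \delta_C^2 + b^2}{\delta_A^2 \delta_C^2}.
\end{equation*}

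Combining this with $\sigma_{\min}(X) \ge 1/\|Y\|$ produces
\begin{equation*}
    \sigma_{\min}(X) \ge \frac{\delta_A \delta_C}{\sqrt{\delta_A^2 + \delta_C^2 + b^2}} = \frac{\delta_A \delta_C}{b}\left(1 + \frac{\delta_A^2 + \delta_C^2}{b^2}\right)^{-1/2},
\end{equation*}
which is exactly the stated inequality. The only substantive point is recognizing that the block-triangular inversion identity extends from invertible square blocks to rectangular blocks of full column rank once one replaces inverses with Moore--Penrose pseudoinverses; after that, bounding $\|Y\|$ is a short two-variable Cauchy--Schwarz computation, so I do not anticipate any serious obstacle beyond careful bookkeeping of the constants. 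As a side remark, the hypothesis that $B$ has full column rank does not appear to be used anywhere in this argument; only $\|B\| \le b$ is needed.
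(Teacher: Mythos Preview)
Your proof is correct and arrives at exactly the same bound, but the route is genuinely different from the paper's. The paper works with $\sigma_{\min}(X)^{-2} = \|(X^TX)^{-1}\|$: it writes out the four blocks of $(X^TX)^{-1}$ via the Schur-complement inversion formula, bounds each block separately (using that $B^T(I-A(A^TA)^{-1}A^T)B \succeq 0$ to control the Schur complement), and then collapses the block bounds into a scalar inequality via $\|(X^TX)^{-1}\|^2 \le \sum_{i,j}\|X_{ij}\|^2$. Your approach bypasses $(X^TX)^{-1}$ entirely by exhibiting the explicit left inverse $Y=\begin{bmatrix} A^\dagger & -A^\dagger B C^\dagger\\ 0 & C^\dagger\end{bmatrix}$ and using $\sigma_{\min}(X)\ge 1/\|Y\|$; the remaining estimate is a one-line Cauchy--Schwarz. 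This is shorter and more transparent, and, as you observe, it makes clear that the full-column-rank assumption on $B$ is never used---only $\|B\|\le b$. The paper's computation, while heavier, has the minor advantage of giving direct access to the individual blocks of $(X^TX)^{-1}$ should one need them elsewhere, but for the lemma as stated your argument is the cleaner one.
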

	\begin{proof}
		We have that $\sigma_{\min}(X)^{-1} = \sqrt{\|(X^T X)^{-1}\|_2}$, and $(X^T X)^{-1} = \begin{bmatrix} X_{11} & X_{12} \\ X_{21} & X_{22} \end{bmatrix}$, where
		\begin{align*}
			X_{11} & = (A^TA)^{-1} + (A^TA)^{-1}A^T B(B^TB+C^TC - B^TA (A^TA)^{-1}A^T B)^{-1} B^T A (A^TA)^{-1},\\
			X_{12} & = -(A^TA)^{-1}A^T B (B^TB+C^TC - B^TA (A^TA)^{-1}A^T B)^{-1},\\
			X_{21} & = - (B^TB+C^TC - B^TA (A^TA)^{-1}A^T B)^{-1} B^T A (A^TA)^{-1},\\
			X_{22} &= (B^TB+C^TC - B^TA (A^TA)^{-1}A^T B)^{-1}.
		\end{align*}
		We have that $\|(A^TA)^{-1}\| \le \delta_A^{-2}$ and $\|(C^TC)^{-1}\|\le \delta_C^{-2}$. Moreover, since $(A^TA)^{-1}A^T$ is the pseudoinverse of $A$, we have $\|(A^TA)^{-1} A^T\| = \|A (A^TA)^{-1}\| \le \sigma_{\min}(A)^{-1} \le \delta_A^{-1}$. Now note that
		\begin{equation*}
			B^TB + C^TC - B^T A(A^TA)^{-1}A^TB = C^TC + B^T(I - A(A^TA)^{-1}A^T)B \succeq CC^T,
		\end{equation*}
		since $B(I - A(A^TA)^{-1}A^T)B^T$ is symmetric positive semidefinite. Therefore, 
		\begin{equation*}
			\| (B^TB + C^TC - B^TA(A^TA)^{-1}A^TB)^{-1} \| \le \|(C^TC)^{-1}\| \le \delta_C^{-2}.
		\end{equation*}
		Thus, we can get upper bounds on the four blocks of $(X^TX)^{-1}$:
		\begin{align*}
			\|X_{11}\| & \le \delta_A^{-2} (1 + b^2\delta_C^{-2}),\\
			\|X_{12}\|, \|X_{21}\| & \le \delta_A^{-1}\delta_C^{-2} b,\\
			\|X_{22}\| & \le \delta_C^{-2}.
		\end{align*}
		Using the inequality $\|(X^TX)^{-1}\|^2 \le \|X_{11}\|^2 + \|X_{12}\|^2 + \|X_{21} \|^2 + \|X_{22}\|^2$ we obtain
		\begin{align*}
			\|(X^TX)^{-1}\|^2 & \le \delta_A^{-4}(1+b^2\delta_C^{-2})^2 + \delta_C^{-4} + 2 b^2 \delta_A^{-2} \delta_C^{-4} \\
			& \le \delta_A^{-4}(1+b^2\delta_C^{-2})^2 + \delta_C^{-4} + 2 \delta_A^{-2}(1 + b^2\delta_C^{-2})^2 \delta_C^{-2}\\
			& = (\delta_A^{-2}(1 + b^2\delta_C^{-2}) + \delta_C^{-2})^2 = (b^2 \delta_A^{-2} \delta_C^{-2} + \delta_A^{-2} + \delta_C^{-2})^2,
		\end{align*}
		from which we obtain
		\begin{equation*}
			\sigma_{\min}(X) \ge \frac{1}{\sqrt{b^2 \delta_A^{-2} \delta_C^{-2} + \delta_A^{-2} + \delta_C^{-2}}} = \frac{\delta_A \delta_C }{b} \left ( 1 + \frac{\delta_A^2 + \delta_C^2}{b^2}\right )^{-1/2},
		\end{equation*}
		which is the desired result.
	\end{proof}

	\subsection{Analysis for low-rank matrices}\label{sec:exact}
	
	In this section, we consider the case in which $A$ has a rank exactly $k$, and we want to find $\ell$ rows and columns that span \emph{exactly} the range of $A$ and $A^T$. Throughout this section, we will make the assumption that the implementation of the strong RRQR algorithm, when given a zero residual matrix, selects one of the remaining pivots uniformly at random.

	\begin{assumption}\label{ass:exact}
		We assume that $A = XZY^T$ for some $X\in \mathbb{R}^{n \times k}$, $Z \in \mathbb{R}^{k \times k}$, and $Y \in \mathbb{R}^{m \times k}$ that have rank exactly $k$. Let us assume that $Z = \mathrm{blkdiag}(Z_1, Z_2, Z_3)$, and that $X$ and $Y$ can be partitioned as 
		\begin{equation*}
			X = \begin{bmatrix} X_1 & X_2 & X_3 \end{bmatrix}, \qquad Y = \begin{bmatrix} Y_1 & Y_2 & Y_3 \end{bmatrix},
		\end{equation*}
		with $X_1\in \mathbb{R}^{n \times k_1}$, $Y_1\in \mathbb{R}^{m \times k_1}$, $X_2\in \mathbb{R}^{n \times k_2}$, $Y_2\in \mathbb{R}^{m \times k_2}$,  $X_3\in \mathbb{R}^{n \times k_3}$, $Y_3\in \mathbb{R}^{m \times k_3}$, $Z_1 \in \mathbb{R}^{k_1\times k_1}$, $Z_2 \in \mathbb{R}^{k_2\times k_2}$, $Z_3 \in \mathbb{R}^{k_3\times k_3}$, for some $k_1+k_2+k_3=k$. Let us further assume that $\begin{bmatrix} X_1 & X_2\end{bmatrix}$ and $\begin{bmatrix} Y_1 & Y_3\end{bmatrix}$ have orthonormal columns and are $\mu$-coherent and that each column of $X_3$ and $Y_2$ is sparse and has at most $\beta$ nonzero elements.
	\end{assumption}
	
	\begin{figure}[htb]
		\centering
		\includegraphics[scale=.22]{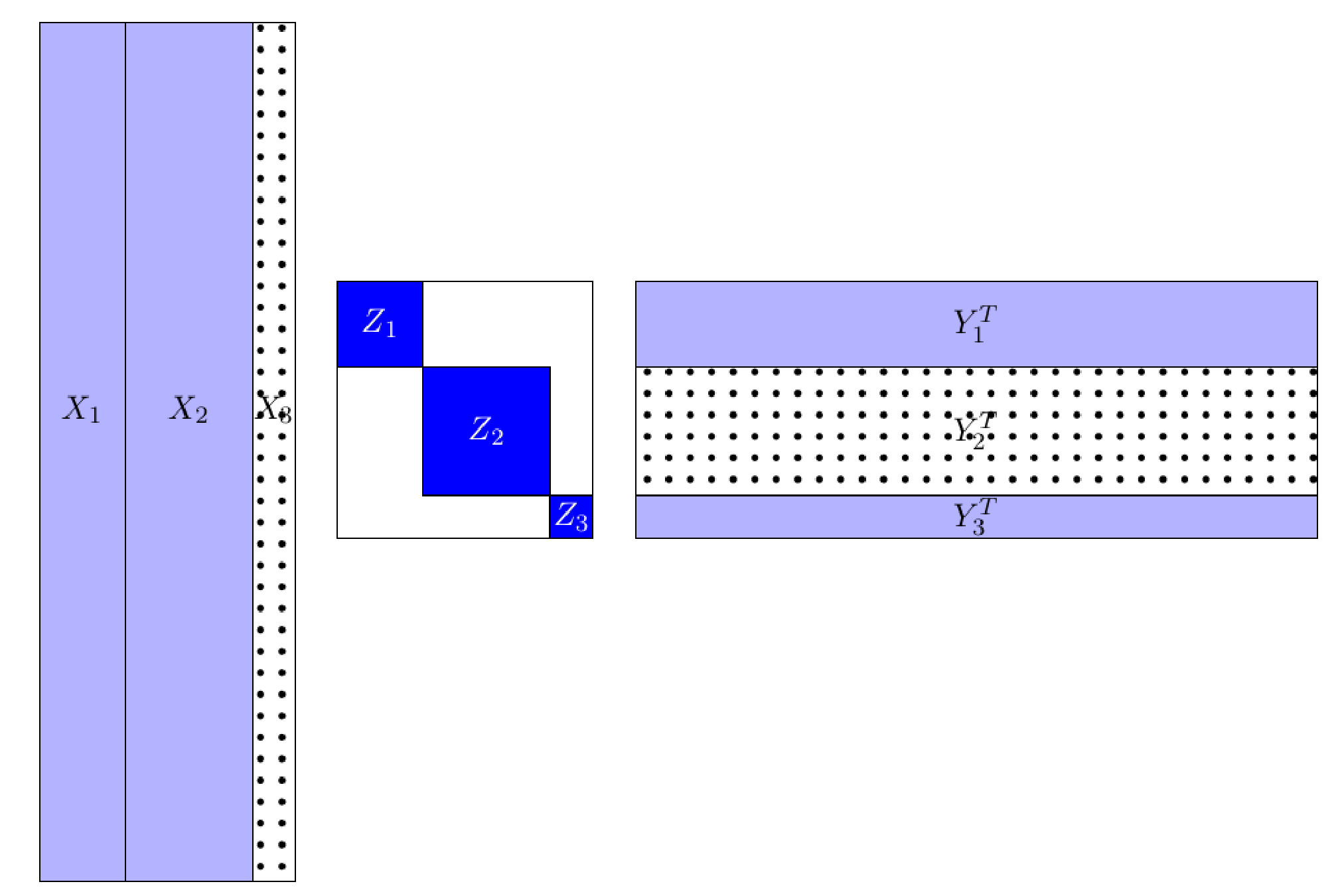}
		\caption{Illustration of the partitioning of the factorization of the matrix $A$ from Assumption~\ref{ass:exact}. The dotted matrices are sparse, the light blue matrices are incoherent, and the dark blue matrices can be anything.}
		\label{fig:matrixA}
	\end{figure}

	\begin{theorem}\label{thm:exact}
		If Assumption~\ref{ass:exact} holds and we take
		\begin{equation*}
			\ell_0 = \alpha\mu k, \quad \ell_a = k, \quad \ell_b = \alpha\mu k,
		\end{equation*}
		then Algorithm~\ref{alg:variation} returns sets of rows $I$ and columns $J$ such that $$A = A(:,J)A(:,J)^\dagger A A(I,:)^\dagger A(I,:)$$ with probability at least
		\begin{equation*}
			1 - \left ( \frac{\beta \ell k}{n} + \frac{\beta \ell k}{m} + 2ke^{-\alpha} \right ),
		\end{equation*}
		where $\ell = \max\left \{ \ell_0, \ell_a,\ell_b\right \} = \alpha\mu k$.
	\end{theorem}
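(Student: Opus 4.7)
My plan is to reduce the identity $A = A(:,J)A(:,J)^\dagger A A(I,:)^\dagger A(I,:)$ to a pair of rank conditions. Since $A$ has rank exactly $k$, the identity holds if and only if $A(:,J)$ and $A(I,:)$ both have rank $k$. Using that $Z$ is invertible and $X, Y$ have full column rank, this is in turn equivalent to $Y(J,:)$ having rank $k$ and $X(I,:)$ having rank $k$. The two claims are symmetric --- one is driven by lines 1--3 of Algorithm~\ref{alg:variation} (involving $I_0$, $J_a$, $J_b$), the other by lines 4--6 applied to $A^T$ --- so I will sketch the argument for $Y(J,:)$ and invoke symmetry at the end.

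I would set up four good events controlling the column selection: (A) $X_3(I_0,:) = 0$; (B) $[X_1(I_0,:)\ X_2(I_0,:)]$ has full column rank $k_1+k_2$; (C) $Y_2(J_b,:) = 0$; (D) $[Y_1(J_b,:)\ Y_3(J_b,:)]$ has full column rank $k_1+k_3$. Events (A) and (C) are sparsity events: $X_3$ and $Y_2$ have at most $\beta k_3$ and $\beta k_2$ nonzero rows respectively, so a straightforward union bound bounds their failure probabilities by $\ell_0 \beta k_3 / n$ and $\ell_b \beta k_2 / m$. Events (B) and (D) follow from Corollary~\ref{cor:fullrank} applied to the $\mu$-coherent matrices $[X_1\ X_2]$ and $[Y_1\ Y_3]$, since $\ell_0, \ell_b \ge \alpha\mu k$; the failure probabilities are at most $(k_1+k_2)e^{-\alpha}$ and $(k_1+k_3)e^{-\alpha}$.

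Under (A) and (B) one can write $A(I_0,:) = [X_1(I_0,:)Z_1\ \ X_2(I_0,:)Z_2]\,[Y_1\ Y_2]^T$ as a product of a matrix of full column rank $k_1+k_2$ with a matrix of full row rank, so $A(I_0,:)$ has rank exactly $k_1+k_2$. Running sRRQR on $A(I_0,:)$ with parameter $\ell_a = k$ therefore takes $k_1+k_2$ genuine pivots producing a column index set $J_{a,1}$ with $A(I_0,J_{a,1})$ still of rank $k_1+k_2$; since the leading factor already has rank $k_1+k_2$, this pins down the square block $[Y_1(J_{a,1},:)\ Y_2(J_{a,1},:)]$ to be invertible. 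The remaining $k_3 = \ell_a - (k_1+k_2)$ indices $J_{a,2}$ are drawn uniformly at random, by the standing assumption on sRRQR with a zero residual.

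The heart of the proof is to show, under (A)--(D), that $Y(J_{a,1}\cup J_b,:)$ has rank $k$ --- and hence so does $Y(J,:)$. I would view this matrix in $2\times 3$ block form (rows split as $J_{a,1}$ versus $J_b$, columns split according to $Y_1,Y_2,Y_3$) and apply a column operation using the invertibility of $[Y_1(J_{a,1},:)\ Y_2(J_{a,1},:)]$ to annihilate the top-right $Y_3(J_{a,1},:)$ block. The resulting block lower triangular matrix has rank $(k_1+k_2) + \mathrm{rank}(W)$, where the Schur complement is
\begin{equation*}
W = Y_3(J_b,:) - [Y_1(J_b,:)\ Y_2(J_b,:)]\,[Y_1(J_{a,1},:)\ Y_2(J_{a,1},:)]^{-1} Y_3(J_{a,1},:).
\end{equation*}
If $Wv = 0$ for some $v$, event (C) kills the $Y_2(J_b,:)$ contribution and leaves $Y_3(J_b,:)v - Y_1(J_b,:)u = 0$ for some $u \in \mathbb{R}^{k_1}$, whence event (D) forces $v=0$. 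The symmetric argument (exchanging the roles of $X$ and $Y$, $I$ and $J$, and using events on $J_0$ and $I_b$) handles $X(I,:)$, and a final union bound over all failure probabilities gives the claimed bound. The main obstacle is the Schur complement step: one must verify both that sRRQR on the rank-deficient matrix $A(I_0,:)$ genuinely forces $[Y_1(J_{a,1},:)\ Y_2(J_{a,1},:)]$ to be invertible (and not merely that a product has the right rank), and that event (C) is precisely what lets (D) close the argument --- without (C), the $Y_2(J_b,:)$ contribution to $Wv$ need not be absorbed into $\mathrm{colsp}(Y_1(J_b,:))$, and the conclusion can fail.
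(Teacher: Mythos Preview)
Your proposal is correct and follows essentially the same route as the paper: reduce exact recovery to $\mathrm{rank}\,Y(J,:)=k$ (plus the symmetric statement for $X(I,:)$), set up the same four events with the same probability bounds via sparsity and Corollary~\ref{cor:fullrank}, and use the sRRQR property on the rank-$(k_1+k_2)$ matrix $A(I_0,:)$ to force $[Y_1(J_{a,1},:)\ Y_2(J_{a,1},:)]$ to be invertible. The only difference is cosmetic: where you run a Schur-complement elimination and argue the complement $W$ has trivial kernel, the paper simply permutes the $Y_2$ block to the front and observes that, since $Y_2(J_b,:)=0$, the matrix $Y(J,:)$ is already block upper-triangular with full-column-rank diagonal blocks $Y_2(J_a,:)$ and $[Y_1(J_b,:)\ Y_3(J_b,:)]$, which immediately gives rank $k$.
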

	
	\begin{proof}
		It is sufficient to show that, with high probability, lines 1--3 in Algorithm~\ref{alg:variation} select a column index set $A(:,J)$ that spans the range of $X$, and lines 4--6 select a row index set $A(I,:)$ that spans the range of $Y^T$. We will show the first fact only since the second one is completely analogous.

		In the first step of the algorithm, we select $\ell_0 \ge \alpha\mu(k_1+k_2)$ rows uniformly at random. By Corollary~\ref{cor:fullrank}, with probability at least $1- ke^{-\alpha} = 1 - p_1$ the submatrix $$\begin{bmatrix} X_1(I_0,:) &  X_2(I_0,:)\end{bmatrix}$$ has rank $k_1 + k_2$. By the sparsity assumption, with a probability of at least 
		\begin{equation}\label{eq:prob}
			1 - k_3\left ( 1 - \left ( 1 - \frac{\beta}{n}\right )^{\ell_0} \right ) = 1 - p_2 \ge 1 - \frac{\beta \ell_0 k_3}{n}
		\end{equation}
		the submatrix $X_3(I_0,:)$ is made of zeros. Indeed, for each column of $X_3$, by selecting $\ell_0$ columns uniformly at random (without replacement), we have a probability of at least $\left (1 - \frac{\beta}{n} \right )^{\ell_0}$ of sampling only zero entries, and a probability of at most $1 - \left ( 1- \frac{\beta}{n}\right )^{\ell_0}$ of selecting at least one nonzero entry; the bound~\eqref{eq:prob} follows from a union bound on the $k_3$ columns of $X_3$. 
		Therefore, using again a union bound, with probability at least $1-p_1-p_2$, the matrix $A(I_0,:)$ has rank exactly $k_1+k_2$. 
		
		The sRRQR algorithm gives an index set $J_a$ of cardinality $k_1 + k_2$, and from the rank-revealing property, we have that the top $(k_1+k_2)\times (k_1+k_2)$ matrix $A(I,J_a)$ has full rank (is invertible), therefore also 
		\begin{equation}\label{eq:small}
			\begin{bmatrix} Y_1(J_a,:) & Y_2(J_a,:)\end{bmatrix} \in \mathbb{R}^{(k_1+k_2) \times (k_1+k_2)}
		\end{equation}
		is invertible. The index set $J_b$ is chosen uniformly at random from $\{1, \ldots, n\}$. We can partition
		\begin{equation*}
			Y(J,:) = \begin{bmatrix} Y_1(J_a, :) & Y_2(J_a, :) & Y_3(J_a, :) \\
				Y_1(J_b, :) & Y_2(J_b, :) & Y_3(J_b, :) \end{bmatrix}.
		\end{equation*}
		The matrix $Y_2(J_a,:)$ has rank $k_2$ because it is a submatrix with $k_2$ columns of the invertible matrix~\eqref{eq:small}. With probability at least $1 - ke^{-\alpha} = 1-p_3$, by Corollary~\ref{cor:fullrank} the submatrix $\begin{bmatrix} Y_1(J_b,:) & Y_3(J_b,:)\end{bmatrix}$ has full rank $k_1+k_3$. With probability at least  $1 - k_2 \left ( 1 - \left ( 1 - \frac{\beta}{m}\right )^{\ell_b }\right ) = 1 - p_4$, the matrix $Y_2(J_b,:)$ is made of zeros. Therefore, with probability at least $1-p_3-p_4$, the matrix $Y(J,:)$ has rank $k$. This implies that the matrix
		\begin{equation*}
			A(:,J) = X Z Y(J,:)^T
		\end{equation*}
		also has full rank, so it spans the range of the columns of $A$, and this concludes the proof.
	\end{proof}
	
	\subsection{Analysis for numerically low-rank matrices}\label{sec:approx}
	
	We now consider the case in which $A$ is a perturbation of order $\varepsilon$ of a rank-$k$ matrix of the form described in Assumption~\ref{ass:exact}. 
	
	\begin{assumption}\label{ass:approx}
		Assume that $A = XZY^T + E$ for some $X\in \mathbb{R}^{n \times k}$, $Z \in \mathbb{R}^{k \times k}$, $Y \in \mathbb{R}^{m \times k}$, and $E \in \mathbb{R}^{n \times m}$. Assume that $\mathrm{rank}(XZY^T) = k$ and $\|E\| \le \varepsilon$. Let us assume that $Z = \mathrm{blkdiag}(Z_1, Z_2, Z_3)$, and that we can partition $X$ and $Y$ as
		\begin{equation*}
			X = \begin{bmatrix} X_1 & X_2 & X_3 \end{bmatrix}, \qquad Y = \begin{bmatrix} Y_1 & Y_2 & Y_3 \end{bmatrix},
		\end{equation*}
		with $X_1\in \mathbb{R}^{n \times k_1}$, $Y_1\in \mathbb{R}^{m \times k_1}$, $X_2\in \mathbb{R}^{n \times k_2}$, $Y_2\in \mathbb{R}^{m \times k_2}$,  $X_3\in \mathbb{R}^{n \times k_3}$, $Y_3\in \mathbb{R}^{m \times k_3}$, $Z_1 \in \mathbb{R}^{k_1\times k_1}$, $Z_2 \in \mathbb{R}^{k_2\times k_2}$, $Z_3 \in \mathbb{R}^{k_3\times k_3}$, for some $k_1+k_2+k_3=k$. Let us further assume that the block matrices $\begin{bmatrix} X_1 & X_2\end{bmatrix}$ and $\begin{bmatrix} Y_1 & Y_3\end{bmatrix}$ have orthonormal columns and are $\mu$-coherent, and that $X_3$ and $Y_2$ have orthonormal columns and each column is sparse and has at most $\beta$ nonzero elements. 
	\end{assumption}
	
	The first result in this section, Theorem~\ref{thm:approx}, provides an error bound for the column subset selection error given by the index set $J$ selected in lines 1--3 of Algorithm~\ref{alg:variation}. 
	
	\begin{theorem}\label{thm:approx}
		If Assumption~\ref{ass:approx} holds and we take
		\begin{equation*}
			\ell_0 = \alpha\mu (k_1+k_2), \quad \ell_a = k_1 + k_2, \quad \ell_b = \alpha\mu (k_1+k_3)
		\end{equation*}
		then Algorithm~\ref{alg:variation} returns a column index $J$ such that 
		\begin{equation}\label{eq:finalcolbound}
			\| A - A(:,J) A(:,J)^\dagger A \| \le \varepsilon\left (1 +  \sqrt{2} \delta_A ^{-1}\delta_C^{-1} \left ( 1 +\delta_A^2 + \delta_C^2\right )^{1/2}\right ),
		\end{equation}
		where
		\begin{equation*}
			\delta_A^{-1} = \sqrt{\frac{1+\delta}{1-\delta}} \cdot \frac{\sigma_1(XZY^T) \sqrt{km} \|Y^\dagger\|}{\sigma_k(XZY^T) - 2\varepsilon \sqrt{\frac{nmk}{(1-\delta)\ell}}}\text{ and }\delta_C^{-1} = \sqrt{\frac{m}{(1-\delta)\ell}},
		\end{equation*}
		and $\ell = \max\left \{ \ell_0, \ell_a, \ell_b\right \}$,
		with a probability of at least
		\begin{equation}\label{eq:success1}
			p = 1 - k\left ( \frac{\beta \ell}{n} + \frac{\beta \ell }{m} + \left ( \frac{e^{-\delta}}{(1-\delta)^{1-\delta}}\right )^\alpha + \left ( \frac{e^{\delta}}{(1+\delta)^{1+\delta}} \right )^{\alpha}\right ) \text{ for all } \delta \in (0,1).
		\end{equation}
	\end{theorem}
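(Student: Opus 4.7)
The plan is to apply Theorem~\ref{thm:chiu} with $U_1 \in \mathbb{R}^{m \times k}$ an orthonormal basis of $\mathrm{range}(Y)$. Since the row space of $XZY^T$ coincides with $\mathrm{range}(Y)$, one has $Y^T U_2 = 0$, whence $A U_2 = E U_2$ and therefore $\|A U_2\| \le \varepsilon$ and $\|A U_2 U_2(J,:)^T\| \le \varepsilon$ (using that $U_2(J,:)$ is a row submatrix of an orthogonal matrix). Writing $Y = U_1 R$ yields $\|U_1(J,:)^\dagger\| \le \|R\| \cdot \|Y(J,:)^\dagger\|$ with $\|R\| = \|Y\|$; since each of $[Y_1, Y_3]$ and $Y_2$ has orthonormal columns, the Gram-matrix identity $\|Y\|^2 \le \|[Y_1,Y_3][Y_1,Y_3]^T\| + \|Y_2 Y_2^T\| = 2$ gives $\|Y\| \le \sqrt{2}$, which will produce the $\sqrt{2}$ factor in~\eqref{eq:finalcolbound}. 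Theorem~\ref{thm:chiu} thus reduces the task to a lower bound on $\sigma_{\min}(Y(J,:))$.

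To lower bound $\sigma_{\min}(Y(J,:))$, I would exploit sparsity. A union bound over the $k_2$ sparse columns of $Y_2$ shows $Y_2(J_b,:) = 0$ with probability at least $1 - \beta \ell k/m$; symmetrically $X_3(I_0,:) = 0$ with probability at least $1 - \beta \ell k/n$. After reordering the columns of $Y(J,:)$ to place $Y_2$ first, one obtains the block-upper-triangular matrix with diagonal blocks $Y_2(J_a,:)$ and $[Y_1(J_b,:), Y_3(J_b,:)]$, and upper-right block $[Y_1(J_a,:), Y_3(J_a,:)]$ whose operator norm $b$ is at most $1$. Lemma~\ref{lemma:sigmamintriangular} with $b \le 1$, $\delta_A = \sigma_{\min}(Y_2(J_a,:))$, and $\delta_C = \sigma_{\min}([Y_1(J_b,:), Y_3(J_b,:)])$ then gives $\|Y(J,:)^\dagger\| \le \delta_A^{-1} \delta_C^{-1}\sqrt{1 + \delta_A^2 + \delta_C^2}$, matching the structure of~\eqref{eq:finalcolbound}. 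The bound on $\delta_C$ is immediate from Theorem~\ref{thm:incoherent} applied to the $\mu$-coherent orthonormal matrix $[Y_1, Y_3]$ with $\ell_b$ uniformly random rows.

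The main obstacle is the lower bound on $\delta_A$, which I would establish by chaining several estimates. By column interlacing of singular values it suffices to bound $\sigma_{\min}([Y_1(J_a,:), Y_2(J_a,:)])$. Using $X_3(I_0,:) = 0$, write $A(I_0, J_a) = X' Z' [Y_1(J_a,:), Y_2(J_a,:)]^T + E(I_0, J_a)$ with $X' = [X_1(I_0,:), X_2(I_0,:)]$ and $Z' = \mathrm{blkdiag}(Z_1, Z_2)$; Weyl's inequality together with Theorem~\ref{thm:singval}(a) yields
\begin{equation*}
\sigma_{\min}([Y_1(J_a,:), Y_2(J_a,:)]) \ge \frac{\sigma_{\min}(A(I_0, J_a)) - \varepsilon}{\|X' Z'\|}.
\end{equation*}
The numerator is handled via the sRRQR guarantee $\sigma_{\min}(A(I_0, J_a)) \ge \sigma_{k_1+k_2}(A(I_0,:))/\sqrt{1 + (k_1+k_2)(m-k_1-k_2)}$ (responsible for the $\sqrt{km}$ in $\delta_A^{-1}$) combined with the Weyl bound $\sigma_{k_1+k_2}(A(I_0,:)) \ge \sigma_{k_1+k_2}(XZY^T|_{I_0}) - \varepsilon$, which is itself bounded below by $\sigma_k(XZY^T)$ times a factor involving $\sigma_{\min}([X_1(I_0,:), X_2(I_0,:)]) \ge \sqrt{(1-\delta)\ell_0/n}$ via the compact SVD $XZY^T = U \Sigma V^T$. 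The denominator $\|X' Z'\|$ is controlled by $\|X'\| \le \sqrt{(1+\delta)\ell_0/n}$ from Theorem~\ref{thm:incoherent} together with $\|Z'\| = \|[X_1, X_2] Z'\| \le \|XZ\| \le \sigma_1(XZY^T)\|Y^\dagger\|$, the last step using the identity $XZ = XZY^T (Y^T)^\dagger$ and orthonormality of $[X_1, X_2]$. Combining these estimates produces the stated formula for $\delta_A^{-1}$, while the success probability~\eqref{eq:success1} follows from a union bound over the two sparsity events and the four high-probability events from Theorem~\ref{thm:incoherent} (bounds on $\|X'\|$, $\|X'^\dagger\|$, and the analogous quantities for $[Y_1(J_b,:), Y_3(J_b,:)]$).
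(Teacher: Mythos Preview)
Your proposal is correct and follows essentially the same route as the paper's proof: both establish the sparsity events $X_3(I_0,:)=0$ and $Y_2(J_b,:)=0$, invoke Theorem~\ref{thm:incoherent} on $[X_1,X_2]$ and $[Y_1,Y_3]$, chain the sRRQR guarantee with Weyl-type inequalities to lower bound $\sigma_{\min}([Y_1(J_a,:),Y_2(J_a,:)])$, apply Lemma~\ref{lemma:sigmamintriangular} to the block-triangular $Y(J,:)$, and finish with Theorem~\ref{thm:chiu} together with $\|Y\|\le\sqrt{2}$. The only cosmetic difference is that you bound the denominator via $\|X'Z'\|\le\|X'\|\cdot\|Z'\|$ and $\|Z'\|=\|[X_1,X_2]Z'\|\le\|XZ\|\le\sigma_1(XZY^T)\|Y^\dagger\|$, whereas the paper inserts $[Y_1,Y_2]^T([Y_1,Y_2]^T)^\dagger$ and bounds $\|X(I_0,:)ZY^T\|\le\sigma_1(XZY^T)$ directly; both give the same $\delta_A^{-1}$. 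One small overcount: you list ``four'' incoherence events, but only three are actually used (upper bounds on $\|X'^\dagger\|$, $\|X'\|$, and $\|[Y_1(J_b,:),Y_3(J_b,:)]^\dagger\|$); no upper bound on $\|[Y_1(J_b,:),Y_3(J_b,:)]\|$ is needed.
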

	
	The general idea of the proof of Theorem~\ref{thm:approx} is similar to the proof of Theorem~\ref{thm:exact}, but in this case, instead of showing that the selected submatrices have the right rank, we also need to prove that they are not too ill-conditioned. 
	
	\begin{proof} We divide the proof into several steps.
		\paragraph{Step \#1: A lower bound for $\sigma_{k_1 + k_2}(A(I_0,:))$.}
		Let us consider line 1 in Algorithm~\ref{alg:variation}, that is, the selection of $\ell_0$ rows uniformly at random, corresponding to the index set $I_0$. 
		We have 
		\begin{equation*}
			A(I_0,:) = X_1(I_0,:) Z_1 Y_1^T + X_2(I_0,:) Z_2 Y_2^T + X_3(I_0,:) Z_3 Y_3^T + E(I_0,:).
		\end{equation*}
		With probability at least $1 - k_3\left ( 1 - \left ( 1 - \frac{\beta}{n}\right )^{\ell_0}\right ) = 1-p_1$, the submatrix  $X_3(I_0,:)$ is made of zeros; the argument is the same as in the proof of Theorem~\ref{thm:exact}. By Theorem~\ref{thm:incoherent}, with probability at least $1 - (k_1+k_2)\left ( \frac{e^{-\delta}}{(1-\delta)^{1-\delta}}\right )^{\alpha} = 1 - p_2$, we have $$\left \lVert \begin{bmatrix} X_1(I_0,:) & X_2(I_0,:) \end{bmatrix}^\dagger \right \rVert \le \sqrt{\frac{n}{(1-\delta)\ell_0}}.$$
		Therefore, we can conclude that 
		\begin{align}\label{eq:step1}
			\sigma_{k_1 + k_2}(A(I_0,:)) & \ge \sigma_{k_1 + k_2}(X_1(I_0,:) Z_1 Y_1^T + X_2(I_0,:) Z_2 Y_2^T) - \|E(I_0,:)\|\nonumber \\
			& \ge \sigma_{k_1+k_2}\left ( X_1 Z_1 Y_1^T + X_2 Z_2 Y_2^T \right ) - \varepsilon \nonumber\\
			& \ge \sigma_k(XZY^T)\sqrt{\frac{(1-\delta)\ell_0}{n}} - \varepsilon,
		\end{align}
		where the first inequality follows from Theorem~\ref{thm:singval}(b) with $i = k_1+k_2$, $j = 1$, $F = A(I_0,:)$, and $G = -E(I_0,:)$, and the last inequality is obtained from Theorem~\ref{thm:singval}(b) by setting $i=k_1 + k_2$, $j= k_3+1$, $F = X_1 Z_1 Y_1^T + X_2 Z_2 Y_2^T$ and $G = X_3 Z_3 Y_3^T$.
		
		\paragraph{Step \#2: The matrix $A(I_0,J_a)$ is well-conditioned.}
		Let us now consider the column index set $J = (J_a, J_b)$ where $J_a$ contains the $\ell_a$ indices from sRRQR, and $J_b$ contains the new $\ell_b$ indices selected uniformly at random. 
		Since $J_a$ has been selected using sRRQR on $A(I_0,:)$, we have that
		\begin{equation}\label{eq:step2}
			\sigma_{k_1 + k_2}(A(I_0,J_a)) \ge \frac{\sigma_{k_1 + k_2}(A(I_0,:))}{\sqrt{1 + (k_1+k_2)(m-k_1-k_2)}} \ge \frac{\sigma_{k_1+k_2}(A(I_0,:))}{\sqrt{km}}.
		\end{equation}
		
		\paragraph{Step \#3: The matrix $\begin{bmatrix} Y_1(J_a,:) & Y_2(J_a,:) \end{bmatrix}$ is well-conditioned.}
		
		We have that 
		\begin{multline*}
			A(I_0,J_a)  = \begin{bmatrix} X_1(I_0,:) & X_2(I_0,:) \end{bmatrix} \begin{bmatrix} Z_1 & \\ & Z_2 \end{bmatrix} \begin{bmatrix} Y_1(J_a,:) & Y_2(J_a,:)\end{bmatrix}^T + E(I_0,J_a)\\
			= \begin{bmatrix} X_1(I_0,:) & X_2(I_0,:) \end{bmatrix} \begin{bmatrix} Z_1 & \\ & Z_2 \end{bmatrix} \begin{bmatrix} Y_1 & Y_2 \end{bmatrix}^T \left ( \begin{bmatrix} Y_1 & Y_2 \end{bmatrix}^T \right )^\dagger\begin{bmatrix} Y_1(J_a,:) & Y_2(J_a,:)\end{bmatrix}^T + E(I_0,J_a).
		\end{multline*}
		Therefore, 
		\begin{align}\label{eq:step3}
			\sigma_{k_1+k_2} & \left ( \begin{bmatrix} Y_1(J_a,:) & Y_2(J_a,:)\end{bmatrix} \right ) \ge \frac{\sigma_{k_1+k_2}(A(I_0,J_a)-E(I_0,J_a))}{\| A(I_0,:) - E(I_0,:) \| \cdot \left \lVert \begin{bmatrix} X_1(I_0,:) & X_2(I_0,:) \end{bmatrix} \right \rVert \cdot \|Y^\dagger\|}\nonumber \\
			& \ge \sqrt{\frac{n}{(1+\delta)\ell_0}} \cdot \frac{\sigma_{k_1+k_2}(A(I_0,J_a)) - \varepsilon}{\sigma_1(X(I_0,:)ZY^T) \cdot \|Y^\dagger\|}
			\ge \sqrt{\frac{n}{(1+\delta)\ell_0}} \cdot \frac{\sigma_{k_1 + k_2}(A(I_0,:)) - \varepsilon\sqrt{km}}{\sqrt{km} \cdot  \sigma_1(XZY^T) \cdot \|Y^\dagger\|} \nonumber\\
			& \ge \sqrt{\frac{1-\delta}{1+\delta}} \cdot \frac{\sigma_k(XZY^T) - 2\varepsilon \sqrt{\frac{nmk}{(1-\delta)\ell_0}}}{\sigma_1(XZY^T) \sqrt{km} \|Y^\dagger\|},
		\end{align}
		where we used Theorem~\ref{thm:singval}(a) for the first inequality, the second inequality holds with probability at least $1 - (k_1+k_3) \left ( \frac{e^{\delta}}{(1+\delta)^{1+\delta}} \right )^{\alpha} = 1-p_3$ by Theorem~\ref{thm:incoherent}, because $I_0$ was selected uniformly at random, the third inequality follows from~\eqref{eq:step2}, and the fourth from~\eqref{eq:step1}. 
		
		\paragraph{Step \#4: The matrix $Y(J,:)$ is well-conditioned.}
		
		We can partition
		\begin{equation*}
			Y(J,:) = \begin{bmatrix} Y_1(J_a, :) & Y_2(J_a, :) & Y_3(J_a, :) \\
				Y_1(J_b, :) & Y_2(J_b, :) & Y_3(J_b, :) \end{bmatrix} = \begin{bmatrix} Y_1(J_a, :) & Y_2(J_a, :) & Y_3(J_a, :) \\
				Y_1(J_b, :) & 0 & Y_3(J_b, :) \end{bmatrix},
		\end{equation*}
		where the second equality holds with a probability at least 
		\begin{equation*}
			1 - p_4 = 1 - k_2\left ( 1- \left ( 1 - \frac{\beta}{m}\right )^{\ell_b }\right ),
		\end{equation*}
		for the same reason as~\eqref{eq:prob} in the proof of Theorem~\ref{thm:exact}.
		We wish to apply Lemma~\ref{lemma:sigmamintriangular} with $A = Y_2(J_a,:)$, $B = \begin{bmatrix} Y_1(J_a,:) & Y_3(J_a,:)\end{bmatrix}$ and $C = \begin{bmatrix} Y_1(J_b,:) & Y_3(J_b,:)\end{bmatrix}$. Using Theorem~\ref{thm:incoherent} again, we have that
		\begin{equation*}
			\left \lVert \begin{bmatrix} Y_1(J_b,:) & Y_3(J_b,:) \end{bmatrix}^\dagger \right \rVert \le \sqrt{\frac{m}{(1-\delta)\ell_b}}
		\end{equation*}
		with probability at least $1-(k_1+k_3) \left ( \frac{e^{-\delta}}{(1-\delta)^{1-\delta}} \right )^\alpha = 1 - p_4$. We have that $$\left \lVert \begin{bmatrix} Y_1(J_a,:) & Y_3(J_a,:) \end{bmatrix} \right \rVert \le 1$$ because this is a submatrix of a matrix with orthonormal columns. Furthermore, we have that
		\begin{equation*}
			\sigma_{k_2}(Y_2(J_a,:)) \ge \sigma_{k_1+k_2}\left ( \begin{bmatrix} Y_1(J_a,:) & Y_2(J_a,:) \end{bmatrix} \right )
		\end{equation*}
		and the right-hand-side can be bounded by~\eqref{eq:step3}. Therefore, the assumptions of Lemma~\ref{lemma:sigmamintriangular} are satisfied with 
		\begin{equation*}
			\delta_A^{-1} = \sqrt{\frac{1+\delta}{1-\delta}} \cdot \frac{\sigma_1(XZY^T) \sqrt{km} \|Y^\dagger\|}{\sigma_k(XZY^T) - 2\varepsilon \sqrt{\frac{nmk}{(1-\delta)\ell_0}}}, \quad \delta_C^{-1} = \sqrt{\frac{m}{(1-\delta)\ell_b}}, \quad b = 1,
		\end{equation*}
		so
		\begin{equation}\label{eq:YJ+}
			\|Y(J_1,:)^\dagger\| \le \delta_A ^{-1}\delta_C^{-1} \left ( 1 + \delta_A^2 + \delta_C^2\right )^{1/2}.
		\end{equation}
		
		\paragraph{Step \#5: The columns are a good subset for low-rank approximation.}
		
		In order to bound $\|A - A(:,J)A(:,J)^\dagger A\|$ we apply Theorem~\ref{thm:chiu} to the matrix $A$, the unitary matrix $U = \begin{bmatrix} U_1 & U_2 \end{bmatrix}$ where $U_1$ is an orthonormal basis of $Y$ (which has rank $k$) and $U_2$ is an orthogonal complement of $Y$, and the index set $J$. We have that $\|A U_2 \| = \|E U_2 \| \le \varepsilon$, and $\|U_2(J,:)\| \le 1$. To bound $\|U_1(J,:)^\dagger\|$ let us consider the skinny QR decomposition $Y = U_1 R$ for some invertible matrix $R \in \mathbb{R}^{k \times k}$; then
		\begin{equation*}
			\|U_1(J,:)^\dagger\| = \|R \cdot Y(J,:)^\dagger\| \le \|Y(J,:)^\dagger \| \cdot \|Y\| \le \sqrt{2}\delta_A ^{-1}\delta_C^{-1} \left ( 1 + \delta_A^2 + \delta_C^2\right )^{1/2},
		\end{equation*}
		where the last equality follows from~\eqref{eq:YJ+} and the fact that $Y$ has two blocks of orthonormal columns. Applying Theorem~\ref{thm:chiu} then gives
		\begin{equation}\label{eq:colbound}
			\|A - A(:,J) A(:,J)^\dagger A\| \le \varepsilon (1 +  \sqrt{2}\delta_A ^{-1}\delta_C^{-1} \left ( 1 + \delta_A^2 + \delta_C^2\right )^{1/2}). 
		\end{equation}
		
		\paragraph{Step \#6: The failure probability.}
		
		The success probability $p$ can be bounded using a union bound on all the probabilistic results used in this proof: we have 
			\begin{align*}
				p & \ge 1 - (p_1 + p_2 + p_3 + p_4) \\
				& \ge 1 - k_3\left ( 1 - \left ( 1 - \frac{\beta}{n}\right )^{\ell_0} \right )  - (k_1+k_2)\left ( \frac{e^{-\delta}}{(1-\delta)^{1-\delta}}\right )^{\alpha} \\
				& \qquad - (k_1+k_3) \left ( \frac{e^{\delta}}{(1+\delta)^{1+\delta}} \right )^{\alpha} - k_2\left ( 1 - \left ( 1 - \frac{\beta}{m}\right )^{\ell_b }\right )\\
				& \ge \left ( 1-\frac{\beta \ell_0 k_3}{n} \right ) + \left ( 1 - \frac{\beta \ell_b k_2}{m} \right ) - k\left ( \frac{e^{-\delta}}{(1-\delta)^{1-\delta}}\right )^\alpha - k \left ( \frac{e^{\delta}}{(1+\delta)^{1+\delta}} \right )^{\alpha},
			\end{align*}
		which concludes the proof of the theorem.
	\end{proof}
	
	We provide a simplified bound below that ``hides'' all the constants of Theorem~\ref{thm:approx} that distract from the essence of the result.
	\begin{corollary}\label{cor:bigO}
		In the same assumptions as Theorem~\ref{thm:approx}, we have that
		\begin{equation*}
			\|A - A(:,J)A(:,J)^\dagger A\| \le \mathcal O\left (\varepsilon \sqrt{\frac{nmk}{\ell}} \cdot \frac{\sigma_1(XZY^T)}{\sigma_k(XZY^T)} \cdot \|Y^\dagger\|\right )
		\end{equation*}
		with a probability of at least
		\begin{equation*}
			1 - k\left ( \frac{\beta \ell}{n} + \frac{\beta \ell}{m} + c_1^{\alpha} + c_2^{\alpha}\right ).
		\end{equation*}
		for some fixed constants $c_1, c_2 \in (0,1)$.
	\end{corollary}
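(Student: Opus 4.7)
The corollary is a bookkeeping simplification of Theorem~\ref{thm:approx}: no new estimates are needed, only a choice of constants and some loose upper bounds to absorb lower-order terms. My plan is to (i) fix $\delta$ in the theorem to a concrete value, (ii) simplify the explicit expression $\delta_A^{-1}\delta_C^{-1}(1+\delta_A^2+\delta_C^2)^{1/2}$ into a single big-$\mathcal O$ factor, and (iii) collapse the four terms of the failure probability into the symmetric form stated.

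For (i), I would set $\delta = 1/2$ in Theorem~\ref{thm:approx}. Each $\sqrt{(1\pm\delta)^{\pm1}}$ becomes a universal constant, and the two bases $e^{-\delta}/(1-\delta)^{1-\delta}$ and $e^{\delta}/(1+\delta)^{1+\delta}$, both strictly less than $1$ for $\delta\in(0,1)$, become the constants $c_1,c_2\in(0,1)$ of the corollary. For (iii), the elementary inequality $1 - (1-\beta/n)^{\ell_0} \le \beta\ell_0/n$ together with $k_i \le k$ and $\ell_0,\ell_b \le \ell$ collapses the four probabilistic contributions in~\eqref{eq:success1} into $k(\beta\ell/n + \beta\ell/m + c_1^\alpha + c_2^\alpha)$, which matches the corollary.

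Step (ii) is the only slightly delicate piece. The factor $(1+\delta_A^2+\delta_C^2)^{1/2}$ is $O(1)$ since $\delta_A,\delta_C\le 1$, and $\delta_C^{-1} = O(\sqrt{m/\ell})$ is already in closed form. For $\delta_A^{-1}$, the $\varepsilon$-subtraction $2\varepsilon\sqrt{nmk/((1-\delta)\ell)}$ in the denominator must be absorbed into a constant multiple of $\sigma_k(XZY^T)$: this is legitimate because outside the regime $\varepsilon\sqrt{nmk/\ell} \lesssim \sigma_k(XZY^T)$ the stated bound is already vacuous, so the implicit big-$\mathcal O$ constant can absorb the resulting factor. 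After this replacement, $\delta_A^{-1} = O(\sqrt{km}\,\sigma_1/\sigma_k\,\|Y^\dagger\|)$, so $\delta_A^{-1}\delta_C^{-1} = O(\sqrt{nmk/\ell}\,\sigma_1/\sigma_k\,\|Y^\dagger\|)$ after a loose upper bound trading a factor of $\sqrt{m}$ for $\sqrt{n}$ to match the symmetric quantity in the corollary. The leading additive $\varepsilon$ from the ``$1+\cdots$'' in the theorem is absorbed into the big-$\mathcal O$ since the multiplicative factor is at least $1$ in any nontrivial regime; the only real obstacle is this handling of the $\varepsilon$-dependent denominator, and each individual step is at most a line of algebra.
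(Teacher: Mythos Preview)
Your approach is correct and matches the paper's, which gives no formal proof and simply remarks that the corollary ``hides'' the constants of Theorem~\ref{thm:approx}; the paper suggests $\delta = 4/5$ as an illustrative choice rather than your $\delta = 1/2$, but either works and yields constants $c_1,c_2\in(0,1)$.

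One step in your write-up deserves a second look: ``trading a factor of $\sqrt{m}$ for $\sqrt{n}$.'' The raw product $\delta_A^{-1}\delta_C^{-1}$ coming out of Theorem~\ref{thm:approx} is of order $m\sqrt{k/\ell}\cdot(\sigma_1/\sigma_k)\cdot\|Y^\dagger\|$, and replacing $m$ by $\sqrt{nm}$ is a valid upper bound only when $m\le n$. The paper's symmetric form $\sqrt{nmk/\ell}$ seems to be chosen for consistency with Corollary~\ref{cor:variation} (where bounds for both $A$ and $A^T$ are combined) rather than as a literal upper bound for arbitrary $m,n$; the paper is silent on this, so your derivation is as rigorous as the original on this point, but you should be aware that the ``trade'' is not an inequality in general.
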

	For example, if we fix $\delta = 4/5$, the success probability~\eqref{eq:success1} gives $c_1 \approx 0.62$ and $c_2 \approx 0.78$. For this corollary (and Theorem~\ref{thm:approx}) to make sense, we need to have $\varepsilon \sqrt{nmk/\ell} \ll \sigma_k(XZY^T)$, that is, the magnitude of the perturbation $E$ needs to be small compared to the smallest ``relevant'' singular value of $A$.
	
	As mentioned in the introduction, there is a well-known way to relate the row/column subset selection error to the low-rank approximation error; see, e.g.,~\cite{Cortinovis2020,Drineas2008,Saibaba2016,Sorensen2016}. In particular, for any index sets $I$ and $J$, we have
	\begin{align}
		\|A - & A(:,J)A(:,J)^\dagger A A(I,:)^\dagger A(I,:) \|_2 \nonumber \\
		& \le \|A - A(:,J)A(:,J)^\dagger A \| + \| A(:,J)A(:,J)^\dagger (A - A A(I,:)^\dagger A(I,:))\|\nonumber \\
		& \le \|A - A(:,J)A(:,J)^\dagger A \| + \| A(:,J)A(:,J)^\dagger \| \cdot \|A - A A(I,:)^\dagger A(I,:)\| \nonumber \\
		& \le \|A - A(:,J)A(:,J)^\dagger A \| + \|A - A A(I,:)^\dagger A(I,:)\|, \label{eq:orthogonal}
	\end{align}
	where the third inequality follows from the fact that $A(:,J)A(:,J)^\dagger$ is an orthogonal projection. 
	
	\begin{corollary}\label{cor:variation}
		Given a matrix $A$ satisfying the assumptions of Theorem~\ref{thm:approx} and given the output index sets $I$ and $J$ from Algorithm~\ref{alg:variation} with $\ell_0 = \ell_b = \alpha \mu k$ and $\ell_a = k$, we have that
		\begin{equation*}
			\left \lVert A - A(:,J)A(:,J)^\dagger A A(I,:)^\dagger A(I,:) \right \rVert \le \mathcal O\left (\varepsilon \sqrt{\frac{nmk}{\ell}} \cdot \frac{\sigma_1(XZY^T)}{\sigma_k(XZY^T)} \cdot (\|X^\dagger\| + \|Y^\dagger\|)\right )
		\end{equation*}
		with a probability of at least 
		\begin{equation*}
			1 - 2k\left ( \frac{\beta \ell}{n} + \frac{\beta \ell}{m} + c_1^{\alpha} + c_2^{\alpha}\right ).
		\end{equation*}
	\end{corollary}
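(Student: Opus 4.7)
The plan is to apply the triangle inequality~\eqref{eq:orthogonal} to reduce the CUR error to the sum of a column subset selection error and a row subset selection error, then invoke Corollary~\ref{cor:bigO} twice: once for $A$ to bound the column term, and once for $A^T$ to bound the row term.

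For the column term, lines 1--3 of Algorithm~\ref{alg:variation} produce $J$ under the choices $\ell_0 = \ell_b = \alpha\mu k \ge \alpha\mu(k_1+k_2), \alpha\mu(k_1+k_3)$ and $\ell_a = k \ge k_1+k_2$, which satisfy the hypotheses of Theorem~\ref{thm:approx}. Hence Corollary~\ref{cor:bigO} directly yields
\begin{equation*}
\|A - A(:,J)A(:,J)^\dagger A\| \le \mathcal O\!\left(\varepsilon \sqrt{\tfrac{nmk}{\ell}}\cdot \tfrac{\sigma_1(XZY^T)}{\sigma_k(XZY^T)}\cdot \|Y^\dagger\|\right)
\end{equation*}
with failure probability at most $k(\beta\ell/n + \beta\ell/m + c_1^\alpha + c_2^\alpha)$.

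For the row term, note that lines 4--6 of Algorithm~\ref{alg:variation} applied to $A$ are structurally identical to lines 1--3 applied to $A^T$. The key observation is that Assumption~\ref{ass:approx} is symmetric under transposition with a suitable relabeling: writing $A^T = Y Z^T X^T + E^T$, the roles of $X$ and $Y$ swap, the sparse blocks $X_3$ and $Y_2$ swap, and the incoherent blocks $[X_1\ X_2]$ and $[Y_1\ Y_3]$ swap, while the coherence $\mu$, sparsity $\beta$, and the ratio $\sigma_1/\sigma_k$ are preserved. Applying Corollary~\ref{cor:bigO} to $A^T$ (with $n$ and $m$ interchanged in the dimension factors, which leaves $\sqrt{nmk/\ell}$ invariant) gives
\begin{equation*}
\|A - A A(I,:)^\dagger A(I,:)\| = \|A^T - A^T(:,I)A^T(:,I)^\dagger A^T\| \le \mathcal O\!\left(\varepsilon \sqrt{\tfrac{nmk}{\ell}}\cdot \tfrac{\sigma_1(XZY^T)}{\sigma_k(XZY^T)}\cdot \|X^\dagger\|\right),
\end{equation*}
again with failure probability at most $k(\beta\ell/m + \beta\ell/n + c_1^\alpha + c_2^\alpha)$.

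Adding the two bounds via~\eqref{eq:orthogonal} yields the $(\|X^\dagger\|+\|Y^\dagger\|)$ factor, and a union bound over the two failure events doubles the probability bound, producing the factor $2k$ in the final expression. The only mildly subtle step is verifying that the symmetry of Assumption~\ref{ass:approx} under transposition is genuine, i.e.\ that one really can apply Theorem~\ref{thm:approx} to $A^T$ with the same constants; this is immediate from the statement of the assumption but worth remarking on explicitly, and no new estimates are required beyond those already established.
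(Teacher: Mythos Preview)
Your proposal is correct and follows essentially the same approach as the paper: split the CUR error via the inequality~\eqref{eq:orthogonal}, apply Theorem~\ref{thm:approx} (in its simplified form, Corollary~\ref{cor:bigO}) to $A$ and to $A^T$, and combine the failure probabilities by a union bound. Your added care in checking that the chosen parameters dominate those required by Theorem~\ref{thm:approx}, and that Assumption~\ref{ass:approx} is preserved under transposition after swapping the roles of the index blocks $2$ and $3$, makes explicit exactly the verification the paper leaves implicit.
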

	\begin{proof}
		The bound follows from the inequality~\eqref{eq:orthogonal} and Theorem~\ref{thm:approx} applied to $A$ and $A^T$. The bound on the failure probability follows from a union bound of the failure probabilities given by Theorem~\ref{thm:approx} on $A$ and $A^T$.
	\end{proof}
	
	
	\section{Iterative randomized sRRQR algorithm}\label{sec:iterative}
	
	In this section, we discuss a variation of Algorithm~\ref{alg:variation} that is inspired from~\cite[Algorithm 2.2]{Li2015}. We first present the algorithm (Algorithm~\ref{alg:basic}) and then detail its relation to Algorithm~\ref{alg:variation} and the literature.
	
	We start from $\ell$ randomly selected rows of $A$, corresponding to an index set $I_0$. We perform the sRRQR algorithm on the submatrix $A(I_0,:) \in \mathbb{R}^{\ell \times m}$ with parameter $\ell/2$, and select a column index set $J_1$ made of the corresponding columns selected by sRRQR and another $\ell/2$ columns selected uniformly at random. Now we look at submatrix $A(:,J_1) \in \mathbb{R}^{n \times \ell}$ and perform the sRRQR algorithm on its transpose, with parameter $\ell/2$. This gives us a new set of row indices, to which we add $\ell/2$ new rows, sampled uniformly at random, and merge them into the new row index set $I_1$. This procedure can be iterated a couple of times by alternatively optimizing the columns and the rows. The cost of each iteration is $\mathcal O\left (mn\ell \log_{\eta}(\min\{n,m\})\right )$; in particular, for small values of $\ell$ this is sublinear in the size of $A$. The value of $\ell$ need not be the same in each step of the algorithm; see Algorithm~\ref{alg:basic} below.  The number of rows and columns should be chosen to be slightly larger than $k$, the target rank for the low-rank approximation.
	
	\begin{algorithm}
		\caption{Iterative randomized sRRQR for row and column selection}
		\label{alg:basic}
		\begin{algorithmic}[1]
			\REQUIRE{Matrix $A \in \mathbb{R}^{n \times m}$, iterations $H$, numbers $\ell_0, \ell_h^{\text{sRRQR, col}}, \ell_h^{\text{new, col}},\ell_h^{\text{sRRQR, row}}, \ell_h^{\text{new, row}}$}
			\STATE{Select $\ell_0$ row indices $I_0$ uniformly at random}
			\FOR{$h = 1, \ldots, H$}
			\STATE{Apply sRRQR algorithm to $A(I_{h-1},:)$ with parameter $\ell_h^{\text{sRRQR, col}}$}
			\STATE{Build column index set $J_h$ from the $\ell_h^{\text{sRRQR, col}}$ indices given by sRRQR factorization and $\ell_h^{\text{new, col}}$ new indices selected uniformly at random}
			\STATE{Apply sRRQR algorithm to $A(:,J_h)^T$ with parameter $\ell_h^{\text{sRRQR, row}}$}
			\STATE{Build row index set $I_h$ from the $\ell_h^{\text{sRRQR, row}}$ indices given by sRRQR factorization and $\ell_h^{\text{new, row}}$ new indices selected uniformly at random}
			\ENDFOR
		\end{algorithmic}
	\end{algorithm}
	
	\begin{remark}
		In Algorithm~\ref{alg:basic}, some of the row/column indices of $A$ that have been sampled get thrown away at each step; this has been done because it simplifies our analysis in the rest of the paper. In practice, one can use the extra information contained in those rows and columns and consider final index sets $I = I_0 \cup I_1 \cup \cdots \cup I_{H}$ and $J = J_0 \cup J_1 \cup \cdots \cup J_{H}$. The corresponding low-rank approximation will be slightly more expensive to compute but will also yield a smaller error. Indeed, if $I \subseteq I'$ and $J \subseteq J'$ then
		\begin{equation*}
			\|A - A(:,J')A(:,J')^\dagger A A(I',:)^\dagger A(I',:)\| \le \|A - A(:,J)A(:,J)^\dagger A A(I,:)^\dagger A(I,:)\|.
		\end{equation*}
	\end{remark}
	
	Some remarks are in order:
	\begin{itemize}
		\item Algorithm~\ref{alg:basic} is almost the same as Algorithm 2.2 in \cite{Li2015}, with the difference that we use a sRRQR factorization instead of a pivoted QR factorization, which allows us to obtain theoretical guarantees on the quality of the selected rows and columns.
		\item A strategy that is similar to Algorithm~\ref{alg:basic} has been proposed in~\cite{Goreinov2010} in the context of algorithms for cross approximation; in~\cite{Goreinov2010}, the authors employ a volume maximization technique, closely related to sRRQR factorizations (see~\cite{Gu1996}).
		\item The first three steps of Algorithm~\ref{alg:basic} (lines 1, 3, 4) coincide with the first three lines of Algorithm~\ref{alg:variation}; what changes here afterward is that, instead of restarting the sampling process from scratch to select the row indices, one uses the selected columns at a starting point. The advantage of Algorithm~\ref{alg:basic} is that the process can be iterated, and the quality of the low-rank approximation improves in many cases, in practice (see Example~\ref{ex:hilbert}). 
	\end{itemize}

	\subsection{Numerical examples}\label{sec:examples}
	Let us discuss the behavior of Algorithm~\ref{alg:basic} on some examples. The numerical experiments have been performed in Matlab, and the implementation of the sRRQR algorithm from~\cite{Gu1996} is taken from~\cite{sRRQR}, with parameter $\eta = 1.1$. 
	\begin{example}\label{ex:notwork2}
		Let us consider again the matrix $A$ from Example~\ref{ex:notwork}. Also, for  Algorithm~\ref{alg:basic}, it is very unlikely to end up selecting the first row and column. Indeed, the initial selection of $\ell_0$ rows will likely not include the first row, and since the first column will be zero in the selected submatrix, the first column will not be included in the column index set. 
	\end{example}
	
	\begin{example}\label{ex:rank2-2}
		Let us consider again the matrix from Example~\ref{ex:rank2} and let us show that Algorithm~\ref{alg:basic} is able to recover $A$ successfully. The row set $I_0$, chosen uniformly at random, will most likely not include the first row. However, when looking at the matrix $A(I_0,:)$, the strong RRQR algorithm will select a set $J_1$ containing the first column, plus some other columns sampled uniformly at random. Now, when applying strong RRQR on the column subset $A(:,J_1)$, we will get an index set $I_1$ that includes the first row. At this point, the index sets $I_1$ and $J_1$ are enough to recover the rank-2 matrix $A$. 
	\end{example}
	
	\begin{example}\label{ex:function}
		Let us consider the bivariate function
		\begin{equation*}
			f(x,y) = \frac{5\sin(3x)}{5y-4} + 2e^{x/2}\cos(10y) + \frac{20y}{4x-1}.
		\end{equation*}
		We construct the matrix $A$ as the discretization of $f(x,y)$ on an equispaced grid of size $1000 \times 1000$ in the square $[0,1]^2$, to which we add a random matrix $E$ with norm $\|E\| = 10^{-5}$. The matrix $A$ is of rank approximately $3$, and some of the first three singular vectors have high coherence. A visual representation of $A$ and of its first three right and left singular vectors is in Figure~\ref{fig:matrix}. We run Algorithm~\ref{alg:basic} with $\ell_0 = 6$ and all other $\ell$-parameters set to $3$. After one full iteration, we are able to find a good subset of rows and columns. The quality does not improve with more iterations; see Figure~\ref{fig:plot} (left), where we also compare this with the low-rank approximation error given by the index sets returned by Algorithm~\ref{alg:variation} with $\ell_0 = 6, \ell_a = \ell_b = 3$.
		\begin{figure}[htb]
			\centering
			\begin{subfigure}[b]{0.48\textwidth}
				\centering
				\includegraphics[scale=.5]{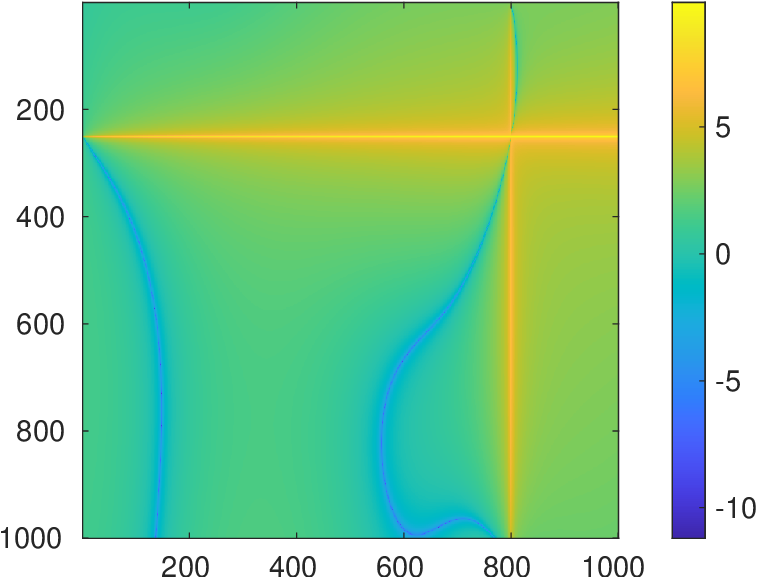}
			\end{subfigure}
			\begin{subfigure}[b]{0.48\textwidth}
				\centering
				\includegraphics[scale=.5]{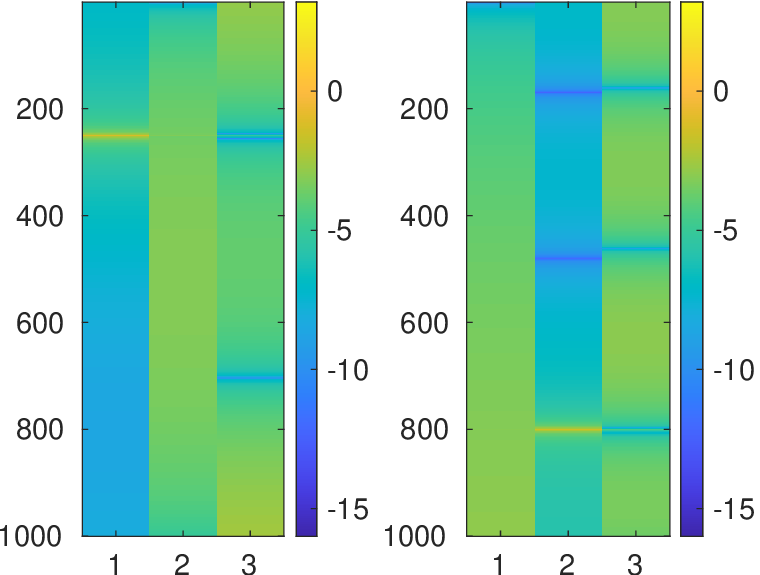}
			\end{subfigure}
			
			\caption{Logarithm of the magnitude of the entries of the matrix $A$ from Example~\ref{ex:function} (left) and of its first three left and right singular vectors (middle and right). The first left singular vector and the second right singular vectors are approximately sparse, while all the other ones are incoherent.}
			\label{fig:matrix}
		\end{figure}
	\end{example}
	
	\begin{example}\label{ex:hilbert}
		While our analysis in Section~\ref{sec:erroranalysis2} is restricted to the first full iteration of Algorithm~\ref{alg:basic}, it is worth noting that, in many cases, the low-rank approximation error continues to decrease if more iterations are performed. As an example of this behavior, let us consider the matrix $A\in\mathbb{R}^{1000 \times 1000}$ with entries $a_{ij} = \frac{1}{i + j^2 + 1}$, and let us run Algorithm~\ref{alg:basic} with $\ell_0 = 6$ and all other $\ell$-parameters set to $5$. The result is shown in Figure~\ref{fig:plot} (right), where we also compare this with the low-rank approximation error given by the index sets returned by Algorithm~\ref{alg:variation} with $\ell_0 = 10, \ell_a = \ell_b = 5$.
		\begin{figure}[htb]
				\centering
				\includegraphics[scale=.58]{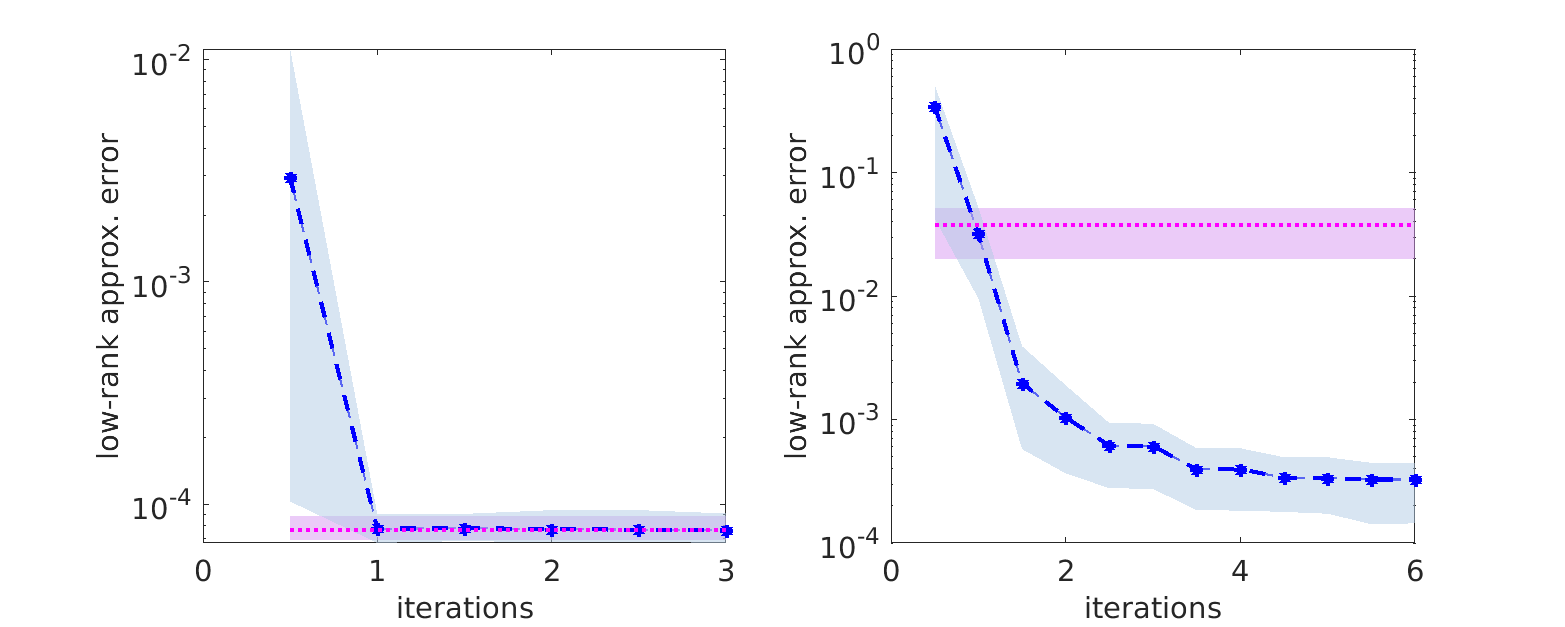}
			\caption{The blue part of the plot indicates the low-rank approximation error, namely, the quantity $\|A - A(:,J)A(:,J)^\dagger A A(I,:)^\dagger A(I,:)\|$, where $I$ and $J$ are the index sets returned by Algorithm~\ref{alg:basic} for the matrix from Example~\ref{ex:function} (left) and the one from Example~\ref{ex:hilbert} (right). The $x$-axis denotes the number of iterations of Algorithm~\ref{alg:basic}. We run Algorithm~\ref{alg:basic} 100 times; the blue dashed line represents the average error, and the light blue areas represent the 90\% confidence region. The dotted magenta line is the approximation error given by the index sets returned by Algorithm~\ref{alg:variation} (it is constant because the method is not iterative), and the pink area is the 90\% confidence region. Note that, while for Example~\ref{ex:function} the two algorithms give similar errors after the first step, a few iterations of Algorithm~\ref{alg:basic} help improve the error in the case of Example~\ref{ex:hilbert}.}
			\label{fig:plot}
		\end{figure}
	\end{example}

	\subsection{Error analysis}\label{sec:erroranalysis2}
	
	We restrict the analysis of Algorithm~\ref{alg:basic} to its first full step. We provide a result that states that exact reconstruction happens with high probability when the matrix $A$ is exactly low-rank and satisfies Assumption~\ref{ass:exact}, and an error bound in the case in which $A$ has been perturbed by error, similarly to what we have done for the analysis of Algorithm~\ref{alg:variation}.
	
	\begin{theorem}\label{thm:exact_variation}
		If Assumption~\ref{ass:exact} holds and we take
		\begin{equation*}
			\ell_0 = \alpha\mu (k_1+k_2), \; \ell_1^{\mathrm{sRRQR, col}} = k_1 + k_2, \; \ell_1^{\mathrm{new, col}} = \alpha\mu (k_1+k_3), \; \ell_1^{\mathrm{sRRQR, row}} = k, \; \ell_1^{\mathrm{new, row}} = 0,
		\end{equation*}
		then one iteration of Algorithm~\ref{alg:basic} returns sets of rows $I_1$ and columns $J_1$ such that 
		\begin{equation*}
			A = A(:,J_1)A(:,J_1)^\dagger A A(I_1,:)^\dagger A(I_1,:)
		\end{equation*}
		with a probability of at least
		\begin{equation*}
			1 - \left ( \frac{\beta \ell k}{n} + \frac{\beta \ell k}{m} + 2ke^{-\alpha} \right ),
		\end{equation*}
		where $\ell = \max\left \{ \ell_0, \ell_1^{\mathrm{sRRQR, col}}, \ell_1^{\mathrm{new, col}}, \ell_1^{\mathrm{sRRQR, row}}\right \} \le \alpha\mu k$.
	\end{theorem}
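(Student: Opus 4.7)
The structure mirrors the proof of Theorem~\ref{thm:exact}: it suffices to show that, with high probability, (i) the column index set $J_1$ produced by lines 1, 3, 4 of Algorithm~\ref{alg:basic} satisfies $\mathrm{rank}(A(:,J_1))=k$, and (ii) the subsequent sRRQR on $A(:,J_1)^T$ extracts a row index set $I_1$ such that $A(I_1,J_1)$ is invertible. Indeed, (i) yields $A(:,J_1)A(:,J_1)^\dagger A = A$; together (i) and (ii) force $A(I_1,:)$ to have rank $k$ and hence $A\, A(I_1,:)^\dagger A(I_1,:) = A$, so composing the two gives the desired identity $A = A(:,J_1)A(:,J_1)^\dagger A\, A(I_1,:)^\dagger A(I_1,:)$.

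For step (i), the first three effective operations of Algorithm~\ref{alg:basic} (select $\ell_0$ random rows, run sRRQR with rank parameter $k_1+k_2$ on $A(I_0,:)$, then append $\alpha\mu(k_1+k_3)$ uniformly random columns) are procedurally identical to lines 1--3 of Algorithm~\ref{alg:variation} with the parameters of Theorem~\ref{thm:exact}. I would therefore replay verbatim the first half of that proof: the sparsity events $X_3(I_0,:)=0$ and $Y_2(J_1^{\mathrm{new}},:)=0$ hold with probability at least $1 - \beta \ell_0 k_3/n$ and $1 - \beta \ell_1^{\mathrm{new,col}} k_2/m$ respectively, while Corollary~\ref{cor:fullrank} applied to $\begin{bmatrix} X_1 & X_2\end{bmatrix}$ and to $\begin{bmatrix} Y_1 & Y_3\end{bmatrix}$ ensures that the corresponding random submatrices have full column rank, each failing with probability at most $k e^{-\alpha}$. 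Combined with the sRRQR rank-revealing property applied to $A(I_0,:)$ (which has rank exactly $k_1+k_2$ once $X_3(I_0,:)$ vanishes), these events imply that $Y(J_1,:)$ has rank $k$, and hence so does $A(:,J_1) = XZY(J_1,:)^T$.

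Step (ii) is deterministic given (i). Since $A(:,J_1)^T \in \mathbb{R}^{\ell \times n}$ then has rank exactly $k$ and $\ell \ge k$ holds trivially for the chosen parameters, sRRQR with rank parameter $k$ selects an index set $I_1$ of size $k$ so that the leading block satisfies $\sigma_k(A(I_1,J_1)^T) \ge \sigma_k(A(:,J_1)^T)/\sqrt{1+k(\ell-k)} > 0$; thus $A(I_1,J_1)$ is an invertible $k\times k$ submatrix. Because $A$ has rank exactly $k$ and contains this invertible $k \times k$ block, the rows $A(I_1,:)$ span the row space of $A$, completing the argument. The choice $\ell_1^{\mathrm{new,row}}=0$ means no additional randomness enters in this half, so a union bound on the four events of step (i), together with the loosenings $\ell_0,\ell_1^{\mathrm{new,col}}\le \ell$ and $k_2,k_3\le k$, produces the advertised failure probability $\beta \ell k/n + \beta \ell k/m + 2ke^{-\alpha}$. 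I do not anticipate a major obstacle here: the two ingredients (incoherence-based sampling via Corollary~\ref{cor:fullrank} and the rank-revealing property of sRRQR on a rank-$k$ matrix) have already been combined in exactly this way inside Theorem~\ref{thm:exact}, and the only novelty is replacing the second independent uniform row draw by a deterministic sRRQR on $A(:,J_1)^T$, which is actually simpler to analyze.
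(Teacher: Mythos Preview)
Your proposal is correct and follows essentially the same approach as the paper: reuse the argument of Theorem~\ref{thm:exact} (lines 1--3 of Algorithm~\ref{alg:variation} coincide with lines 1, 3, 4 of Algorithm~\ref{alg:basic}) to conclude that $A(:,J_1)$ has rank $k$ with the stated probability, then observe that sRRQR on $A(:,J_1)^T$ deterministically extracts an index set $I_1$ with $A(I_1,J_1)$ invertible, forcing $A(I_1,:)$ to span the row space of $A$. One minor slip: the matrix $A(:,J_1)^T$ has $\ell_1^{\mathrm{sRRQR,col}}+\ell_1^{\mathrm{new,col}}$ rows, not $\ell$, so the sRRQR denominator should involve that quantity rather than $\ell$; this does not affect the argument since you only need positivity of $\sigma_k(A(I_1,J_1))$.
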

	
	\begin{proof}
		Since lines 1--4 in Algorithm~\ref{alg:basic} coincide with lines 1--3 in Algorithm~\ref{alg:variation}, the proof of Theorem~\ref{thm:approx} shows that with probability at least 
		\begin{align*}
			1 - p_1 -p_2-p_3-p_4 \ge 1 - \left ( \frac{\beta \ell k}{n} + \frac{\beta \ell k}{m} + 2ke^{-\alpha} \right ),
		\end{align*}
		we have that $A(:,J_1) = XZY(J_1,:)^T$ has full rank $k$. Line 5 in Algorithm~\ref{alg:basic} will then select a row-index set $I_1$ such that $A(I_1, J_1)$ is a rank-$k$ matrix. Hence, we have that $A(I_1,:)$ spans the range of the rows of $A$, so the index sets $I_1$ and $J_1$ guarantee that the matrix $A$ is reconstructed exactly when forming the CUR approximation $A(:,J_1)A(:,J_1)^\dagger A A(I_1,:)^\dagger A(I_1,:)$.
	\end{proof}
	
	Let us now look at the perturbed case. Since the first half iteration of Algorithm~\ref{alg:basic} coincides with lines 1--3 of Algorithm~\ref{alg:variation}, Theorem~\ref{thm:approx} gives a bound on the column subset selection error obtained using the index set $J_1$ obtained in the first half iteration. 
	The following result allows us to obtain an upper bound for the low-rank approximation error of Algorithm~\ref{alg:basic} after one full iteration.
	
	\begin{theorem}[{\cite[Theorem 1]{Dong2023}}]\label{thm:dong}
		Given a ``column space approximator'' $B \in \mathbb{R}^{n \times \ell}$ of $A$ that has full column rank, let $C_1 \in \mathbb{R}^{\ell \times \ell}$ be a subset of $\ell$ linearly independent rows of $B$ and let $C_2 \in \mathbb{R}^{(n-\ell) \times \ell}$ be the set formed by all other rows of $B$. Let $R$ be the rows of $A$ corresponding to $C_1$. Then
		\begin{equation*}
			\|A - A R^\dagger R\| \le \sqrt{1 + \| C_2C_1^\dagger\|_2^2} \cdot \|A - BB^\dagger A\|.
		\end{equation*}
	\end{theorem}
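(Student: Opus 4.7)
The plan is to construct an explicit approximation of $A$ using the rows of $R$, and then exploit the fact that $AR^\dagger R$ is the best such approximation in an appropriate sense.

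First, assume without loss of generality (after a row permutation, which preserves the spectral norm) that $C_1$ consists of the top $\ell$ rows of $B$. Partition
\[
B = \begin{pmatrix} C_1 \\ C_2 \end{pmatrix}, \qquad A = \begin{pmatrix} R \\ A_2 \end{pmatrix}, \qquad E := A - BB^\dagger A = \begin{pmatrix} E_1 \\ E_2 \end{pmatrix},
\]
and set $T := B^\dagger A$, so $A = BT + E$. Reading off the blocks, $R = C_1 T + E_1$ and $A_2 = C_2 T + E_2$. Since $C_1$ is $\ell \times \ell$ with linearly independent rows, it is invertible, and $T = C_1^{-1}(R - E_1) = C_1^\dagger R - C_1^\dagger E_1$. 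Substituting back,
\[
A \;=\; \begin{pmatrix} I \\ C_2 C_1^\dagger \end{pmatrix} R \;+\; \begin{pmatrix} 0 \\ E_2 - C_2 C_1^\dagger E_1 \end{pmatrix} \;=:\; XR + F.
\]

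Next, I invoke the key identity $R(I - R^\dagger R) = 0$, which follows from $R R^\dagger R = R$. This gives, for any matrix $X$ of compatible size,
\[
A - A R^\dagger R \;=\; A(I - R^\dagger R) \;=\; (A - XR)(I - R^\dagger R),
\]
and since $I - R^\dagger R$ is an orthogonal projector (spectral norm $\le 1$), we obtain $\|A - AR^\dagger R\| \le \|A - XR\|$. Applying this with the $X$ above yields
\[
\|A - A R^\dagger R\| \;\le\; \|F\| \;=\; \left\| \begin{pmatrix} -C_2 C_1^\dagger & I \end{pmatrix} \begin{pmatrix} E_1 \\ E_2 \end{pmatrix}\right\|.
\]

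Finally, I bound the operator norm of $\begin{pmatrix} -C_2 C_1^\dagger & I \end{pmatrix}$ by computing $\begin{pmatrix} -C_2 C_1^\dagger & I \end{pmatrix}\begin{pmatrix} -C_2 C_1^\dagger & I \end{pmatrix}^T = I + C_2 C_1^\dagger (C_2 C_1^\dagger)^T$, whose spectral norm is at most $1 + \|C_2 C_1^\dagger\|^2$. Combining with $\|E\| = \|A - BB^\dagger A\|$ gives the claimed inequality.

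The main conceptual step is the observation that, although $AR^\dagger R$ is traditionally described as the best approximation of $A$ by $XR$ in the Frobenius norm, the identity $R(I - R^\dagger R) = 0$ plus the fact that $I - R^\dagger R$ is a contraction actually suffices to get the same optimality property in the spectral norm; everything else is a straightforward block manipulation.
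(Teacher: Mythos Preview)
Your proof is correct. Note, however, that the paper does not actually prove this statement: Theorem~\ref{thm:dong} is quoted verbatim from \cite[Theorem~1]{Dong2023} and used as a black box in the proof of Theorem~\ref{thm:1step}. So there is no ``paper's own proof'' to compare against.

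For what it is worth, your argument is essentially the standard oblique-projector argument that underlies results of this type (including the one in \cite{Dong2023}): write $A = BT + E$ with $T = B^\dagger A$, solve $T = C_1^{-1}(R - E_1)$ from the top block, substitute into the bottom block to get an explicit row-based approximation $XR$, and then use $R(I - R^\dagger R) = 0$ together with the fact that $I - R^\dagger R$ is an orthogonal projector to pass from the constructed $XR$ to the optimal $AR^\dagger R$. The final norm bound via $\bigl\|\begin{pmatrix}-C_2C_1^\dagger & I\end{pmatrix}\bigr\|^2 = \|I + C_2C_1^\dagger(C_2C_1^\dagger)^T\| \le 1 + \|C_2C_1^\dagger\|^2$ is the clean way to get the constant. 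One very minor presentational point: when you equate $\|F\|$ with $\bigl\|\begin{pmatrix}-C_2C_1^\dagger & I\end{pmatrix}E\bigr\|$ you are silently using that the top block of $F$ is zero, so $\|F\|$ equals the norm of its bottom block; this is obvious but could be said explicitly.
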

	
	\begin{theorem}\label{thm:1step}
		If Assumption~\ref{ass:approx} is satisfied and we set
		\begin{multline*}
			\ell_0 = \alpha\mu (k_1+k_2), \quad \ell_1^{\mathrm{sRRQR, col}} = k_1 + k_2, \quad \ell_1^{\mathrm{new, col}} = \alpha\mu (k_1+k_3), \quad  \ell_1^{\mathrm{sRRQR, row}} = k, \\
			\ell_1^{\mathrm{new, row}} = 0, \quad
			\ell_1^{\mathrm{sRRQR, row}} = \ell_1^{\mathrm{sRRQR, col}} + \ell_1^{\mathrm{new, col}},  \quad \ell_1^{\mathrm{new, row}} = 0,
		\end{multline*}
		then the index sets $I_1$ and $J_1$ selected after one full iteration of Algorithm~\ref{alg:basic} satisfy
		\begin{equation*}
			\|A - A(:,J_1)A(:,J_1)^\dagger A A(I_1,:)^\dagger A(I_1,:)\| \le O\left (\varepsilon n\sqrt{mk} \cdot \frac{\sigma_1(XZY^T)}{\sigma_k(XZY^T)} \cdot (\|X^\dagger\| + \|Y^\dagger\|)\right )
		\end{equation*}
		with a probability of at least 
		\begin{equation*}
			1 - k\left ( \frac{\beta \ell}{n} + \frac{\beta \ell}{m} + c_1^{\alpha} + c_2^{\alpha}\right ).
		\end{equation*}
	\end{theorem}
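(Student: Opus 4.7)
The strategy is to leverage the fact that lines 1--4 of Algorithm~\ref{alg:basic} reproduce lines 1--3 of Algorithm~\ref{alg:variation} with matching parameters, so Theorem~\ref{thm:approx} immediately hands us a bound on $\|A - A(:,J_1)A(:,J_1)^\dagger A\|$ with the asserted probability. The only new work concerns the row index set $I_1$: the plan is to use Theorem~\ref{thm:dong} to translate the column subset selection bound into a row subset selection bound, and then combine the two via the elementary inequality~\eqref{eq:orthogonal} to obtain the CUR approximation error.

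For the translation step, the natural choice in Theorem~\ref{thm:dong} is $B = A(:,J_1)$, a matrix of size $n \times \ell$ with $\ell = \ell_1^{\mathrm{sRRQR,col}} + \ell_1^{\mathrm{new,col}}$. Running sRRQR on $A(:,J_1)^T$ with parameter $\ell$ produces $I_1$ such that $C_1 := A(I_1,J_1) \in \mathbb{R}^{\ell \times \ell}$ is nonsingular, and the sRRQR property (c) applied to $A(:,J_1)^T$ states that the corresponding $A_k^{-1} B_k$ block has Chebyshev norm at most $1$. A brief computation identifies this block with $(C_1^{-1} C_2^T)$ where $C_2 = A(I_1^c, J_1)$, so
\[
\|C_2 C_1^\dagger\|_2 \le \sqrt{(n-\ell)\ell}\cdot \|C_2 C_1^\dagger\|_{\max} \le \sqrt{n\ell}.
\]
Combining Theorem~\ref{thm:dong} with the bound on $\|A - A(:,J_1)A(:,J_1)^\dagger A\|$ from Theorem~\ref{thm:approx} (with $\ell = \Theta(\alpha\mu k)$) then yields
\[
\|A - A\,A(I_1,:)^\dagger A(I_1,:)\| \le \sqrt{1 + n\ell}\cdot O\!\left(\varepsilon\sqrt{\tfrac{nmk}{\ell}} \tfrac{\sigma_1(XZY^T)}{\sigma_k(XZY^T)}\|Y^\dagger\|\right) = O\!\left(\varepsilon n \sqrt{mk}\,\tfrac{\sigma_1(XZY^T)}{\sigma_k(XZY^T)}\|Y^\dagger\|\right).
\]
Finally, applying~\eqref{eq:orthogonal}, the CUR error is at most the sum of the column and row subset selection errors, which is dominated by the row error; relaxing $\|Y^\dagger\|$ to $\|X^\dagger\|+\|Y^\dagger\|$ yields the stated bound. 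The probability estimate is inherited verbatim from Theorem~\ref{thm:approx}, since Theorem~\ref{thm:dong} and the sRRQR step are deterministic given $B$.

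\paragraph{Main obstacle.} The subtle point is justifying that $B=A(:,J_1)$ has full column rank $\ell$, which Theorem~\ref{thm:dong} requires: the noiseless part $XZY(J_1,:)^T$ has rank at most $k < \ell$, so full column rank of $A(:,J_1)$ must come from the perturbation $E(:,J_1)$. The cleanest fix will be to observe that if $A(:,J_1)$ does \emph{not} have full column rank, one can replace $B$ by a thinner matrix $A(:,J_1)V$ with $V \in \mathbb{R}^{\ell \times r}$ of full column rank $r \le \ell$ chosen so that $\mathrm{col}(B) = \mathrm{col}(A(:,J_1))$; the projection $BB^\dagger A$ is unchanged, the sRRQR analysis on $A(:,J_1)^T$ still produces a well-conditioned square block of the same row set, and the bound $\|C_2C_1^\dagger\| \le \sqrt{n\ell}$ is unaffected up to constants. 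All remaining steps are routine bookkeeping of the constants already exposed in Theorem~\ref{thm:approx} and the straightforward manipulation $\sqrt{n\ell}\cdot\sqrt{nmk/\ell} = n\sqrt{mk}$.
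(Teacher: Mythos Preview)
Your proposal is correct and follows essentially the same route as the paper: split the CUR error via~\eqref{eq:orthogonal}, invoke Theorem~\ref{thm:approx} for the column term, and apply Theorem~\ref{thm:dong} with $B=A(:,J_1)$ together with the sRRQR max-norm bound to control $\|C_2C_1^\dagger\|\le\sqrt{\ell(n-\ell)}$, yielding the extra $\sqrt{n\ell}$ factor. Your discussion of the full-column-rank hypothesis in Theorem~\ref{thm:dong} is a point the paper does not address explicitly, so your workaround is a welcome refinement rather than a deviation.
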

	
	\begin{proof}
		Using~\eqref{eq:orthogonal}, we can split the error into two terms; the first one is directly bounded by Theorem~\ref{thm:approx}, and we can use Theorem~\ref{thm:dong} to bound the second term $\|A - A A(I_1,:)^\dagger A(I_1,:)\|$. More specifically, we apply Theorem~\ref{thm:dong} with $B = A(:,J_1)$; note that, thanks to the sRRQR properties, we have 
		\begin{equation*}
			\|  A(I_1^c,J_1) A(I_1,J_1)^{-1} \| \le \sqrt{\ell (n-\ell)} \|  A(I_1^c,J_1) A(I_1,J_1)^{-1} \|_{\max} \le \sqrt{\ell (n-\ell)},
		\end{equation*}
		where we denoted by $I_1^c \in \{1, \ldots, n\}^{n - |I_1|}$ the index set corresponding to all the rows not selected by $I_1$. It follows that
		\begin{equation*}
			\|A - A A(I_1,:)^\dagger A(I_1,:)\| \le \sqrt{1 + \ell(n-\ell)} \|A - A(:,J_1)A(:,J_1)^\dagger A\|
		\end{equation*}
		and the quantity $\|A - A(:,J_1)A(:,J_1)^\dagger A\|$ can be bounded, again, using Theorem~\ref{thm:approx}.
	\end{proof}

	\section{Discussion}\label{sec:conclusion}
	
	In this work, we have shown that, with high probability, a uniform sampling strategy combined with sRRQR (Algorithm~\ref{alg:variation}) selects rows and columns that give a good CUR approximation for matrices satisfying Assumption~\ref{ass:approx}. Our error bounds are informative when the perturbation $E$ to the low-rank matrix $XZY^T$ is small enough that $\|E\|  \ll \sqrt{nmk/\ell}$ and the ratio $\sigma_1(XZY^T) / \sigma_k(XZY^T)$ is not too large. Some dependence of the error on the size of the matrix is inevitable in the spectral norm when working with row and column-based low-rank approximations; see~\eqref{eq:bestspectral} and the results in~\cite{Chiu2013}. 
	While Algorithm~\ref{alg:variation} is reminiscent of~\cite[Algorithm 3]{Chiu2013}, a key difference is that Algorithm~\ref{alg:variation} requires some additional random rows and columns after the sRRQR step (lines 3 and 6), which are necessary to obtain good low-rank approximation for a larger class of matrices than the one considered in~\cite{Chiu2013}, that is, matrices that admit a decomposition into factors with suitable incoherence and/or sparsity assumptions up to a small additive error. 
	
	While it is difficult, in general, to check whether a matrix $A$ admits a decomposition satisfying Assumption~\ref{ass:approx}, the objective of this paper is to shed some light on the excellent practical performance of simple sublinear-time algorithms for column and row subset selection. 
	Note that, in Assumption~\ref{ass:approx}, it is easier to think of $XZY^T$ as the singular value decomposition of $A$ or its best rank-$k$ approximation, but in fact, we do not require $X$ and $Y$ to have orthonormal columns, as long as they are well-conditioned. This flexibility allows us to apply our bounds to a larger class of matrices. 
	
	In Section~\ref{sec:iterative}, we introduced Algorithm~\ref{alg:basic}, directly inspired by~\cite[Algorithm 2.2]{Li2015}, and we provided an error bound for the CUR approximation constructed with the indices returned after one iteration. Note that, with respect to Theorem~\ref{thm:approx} on the column subset selection error, the constant
	in the bound of Theorem~\ref{thm:1step} has been multiplied approximately by $\sqrt{n\ell}$. This increase in the error is a result of Theorem~\ref{thm:dong}, and we have not seen it in practice in the numerical examples reported in Section~\ref{sec:examples}.

	Our results do not cover all the matrices for which \emph{there is hope}, in the sense of Section~\ref{sec:hope}. For example, a scenario that is not covered by the current theory and is left for future work consists of matrices that have some incoherence in the blocks $X_1, X_2, Y_1, Y_3$ but do not satisfy any sparseness assumption. Another interesting direction for future research is the analysis of Algorithm~\ref{alg:basic} beyond the first iteration.
	
	

	\bibliographystyle{abbrv}
	\bibliography{Bib} 
	
\end{document}